\newtheorem{prop}{Proposition}
\newtheorem{lem}{Lemma}
\newtheorem{theor}{Theorem}
\newtheorem{cor}{Corollary}
\theoremstyle{definition}
\newtheorem{de}{Definition}
\newtheorem{ex}{Example}
\theoremstyle{remark}
\DeclareMathOperator{\Spec}{Spec}
\DeclareMathOperator{\Aut}{Aut}
\DeclareMathOperator{\supp}{Supp}
\def\reg{{\mathrm{reg}}}
\def\ZZ{{\mathbb Z}}
\def\G{{\mathbb G}}
\def\KK{{\mathbb K}}
\def\CC{\mathcal{C}}
\def\CS{\mathcal{S}}
\def\CR{\mathcal{R}}
\def\CL{\mathcal{L}}
\def\ML{\mathrm{ML}}
\def\FML{\mathrm{FML}}
\def\SAut{\mathrm{SAut}}
\def\Cl{\mathrm{Cl}}
\def\div{\mathrm{div}}
\def\WDiv{\mathrm{WDiv}}
\begin{document}
\date{}
\title[Flexibility of normal affine horospherical varieties]{Flexibility of normal affine horospherical varieties}
\author{Sergey Gaifullin and Anton Shafarevich}

\address{Lomonosov Moscow State University, Faculty of Mechanics and Mathematics, Department of Higher Algebra, Leninskie Gory 1, Moscow, 119991 Russia; \linebreak and \linebreak
National Research University Higher School of Economics, Faculty of Computer Science, Kochnovskiy Proezd 3, Moscow, 125319 Russia}

\subjclass[2010]{Primary 13N15, 14J50;\  Secondary 14R20, 13A50}

\keywords{Affine variety, automorphism, flexible variety, horospherical variety, locally nilpotent derivation, linear group action}

\thanks{The first author was supported by the Foundation for the Advancement of Theoretical Physics and Mathematics ``BASIS''}

\maketitle

\begin{abstract}
We investigate flexibility of affine varieties with an action of a linear algebraic group. Flexibility of a smooth affine variety with only constant invertible functions and a locally transitive action of a reductive group is proved. Also we show that a normal affine complexity-zero horospherical variety is flexible.
\end{abstract}

\section{Introduction}

Let $\KK$ be an algebraically closed field of characteristic zero. Given an affine algebraic variety $X$ over $\KK$, we consider the group $\Aut(X)$ of all regular automorphisms of $X$. In general, $\Aut(X)$ is not an algebraic group. We say that a subgroup $G$ of $\Aut(X)$ is algebraic if $G$ admits a structure of an algebraic group such that the action $G \times X\rightarrow X$ is a morphism of algebraic varieties. 

All connected algebraic subgroups of $\Aut(X)$ are generated by one-dimensional algebraic tori isomorphic to the multiplicative group $\mathbb{G}_m$ of the ground field $\KK$ and one-dimensional unipotent subgroups isomorphic to the additive group $\mathbb{G}_a$ of $\KK$. One-dimensional tori correspond to $\ZZ$-gradings of the algebra $\KK[X]$ of regular functions on $X$. One-dimensional unipotent subgroups correspond to {\it locally nilpotent derivations} (LNDs) of $\KK[X]$, that is, linear mappings $\partial\colon\KK[X]\rightarrow\KK[X]$ such that $\partial$ satisfies the Leibniz rule $\partial(fg)=f\partial(g)+g\partial(f)$ and, for each function $f$, there exists a positive integer $n$ such that $\partial^n(f)=0$.

Let us consider the subgroup $\SAut(X)$ of {\it special automorphisms} of $X$ generated by all algebraic subgroups in $\Aut(X)$ isomorphic to $\mathbb{G}_a$. Recall that a regular point $x\in X$ is called {\it flexible} if the tangent space $T_xX$ is spanned by tangent vectors to orbits of one-dimensional unipotent subgroups. We say that a variety $X$ is flexible if all regular points $x\in X$ are flexible. It is easy to see that an automorphism never takes a regular point to a singular one. Therefore, we obtain an $\SAut(X)$-action on the set of regular points $X^{\reg}$. Recall that an action $G\times X\rightarrow X$ is called {\it infinitely transitive} if for each positive integer $m$ and for every two $m$-tuples of distinct points $(a_1,\ldots, a_m)$ and $(b_1, \ldots, b_m)$ there exists an element $g\in G$ such that for all $i$ we have $g\cdot a_i=b_i$.

\begin{theor}\cite[Theorem~0.1]{AFKKZ}
For an irreducible affine variety $X$ of dimension $\geq 2$, the following
conditions are equivalent.

(i) The group $\SAut(X)$ acts transitively on $X^{\reg}$.

(ii) The group $\SAut(X)$ acts infinitely transitively on $X^{\reg}$.

(iii) The variety $X$ is flexible.
\end{theor}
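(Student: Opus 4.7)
The plan is to prove the cyclic chain (ii) $\Rightarrow$ (i) $\Rightarrow$ (iii) $\Rightarrow$ (ii). The first implication is immediate by specialising $m=1$ in the definition of infinite transitivity, so I concentrate on the remaining two.

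For (i) $\Rightarrow$ (iii), I would argue that the subspace $V_x\subseteq T_xX$ spanned by tangent vectors to orbits of one-dimensional unipotent subgroups through a regular point $x$ coincides with the Zariski tangent space at $x$ to the $\SAut(X)$-orbit through $x$. On the one hand, this orbit is locally the image of a map $(t_1,\ldots,t_k)\mapsto \exp(t_1\partial_1)\cdots\exp(t_k\partial_k)(x)$ for finitely many LNDs $\partial_i$, so its tangent space at $x$ is contained in $V_x$; on the other hand, every $\G_a$-orbit through $x$ lies in the $\SAut(X)$-orbit, giving the reverse containment. Under hypothesis (i), the $\SAut(X)$-orbit of any regular point equals $X^{\reg}$, which has dimension $\dim X$; hence $V_x=T_xX$, and $x$ is flexible.

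For (iii) $\Rightarrow$ (ii), I would first upgrade flexibility to transitivity: at a flexible point $x_0$, pick LNDs $\partial_1,\ldots,\partial_k$ whose values span $T_{x_0}X$; then the map $\Phi\colon\KK^k\to X$, $(t_1,\ldots,t_k)\mapsto\exp(t_1\partial_1)\cdots\exp(t_k\partial_k)(x_0)$, has surjective differential at $0$, so its image contains an open neighbourhood of $x_0$. Thus the $\SAut(X)$-orbit $\Omega$ of $x_0$ is open in $X$. Since flexibility is preserved by $\SAut(X)$ (conjugation carries $\G_a$-subgroups to $\G_a$-subgroups), every regular point is flexible and therefore has an open $\SAut(X)$-orbit; irreducibility of $X$ forces all these open orbits to coincide with $\Omega$, giving transitivity on $X^{\reg}$.

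The main obstacle is the final jump from one-point to $m$-point transitivity, where the hypothesis $\dim X\geq 2$ is used. The key lemma I would establish is: for any distinct regular points $y_1,\ldots,y_m$ and any index $i$, there exists an LND $\partial$ with $\partial(y_j)=0$ in $T_{y_j}X$ for $j\neq i$ but $\partial(y_i)\neq 0$. To construct such $\partial$, I would take $\partial_0$ with $\partial_0(y_i)\neq 0$ (existing by flexibility) and replace it with $f\partial_0$, where $f\in\ker\partial_0$ vanishes at every $y_j$, $j\neq i$, but not at $y_i$; the condition $f\in\ker\partial_0$ ensures $f\partial_0$ is again locally nilpotent. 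Producing such a separating $f$ is the technical core: one must vary $\partial_0$ over a family of LNDs so that the joint kernels contain functions separating $y_i$ from the remaining $y_j$'s, and the hypothesis $\dim X\geq 2$ guarantees these kernels are large enough. Once the lemma is in hand, induction on $m$ combined with one-point transitivity produces elements of $\SAut(X)$ moving $(a_1,\ldots,a_m)$ arbitrarily close to $(b_1,\ldots,b_m)$ on the open set of $m$-tuples of distinct regular points; a standard open-orbit argument in $(X^{\reg})^m$ then upgrades ``arbitrarily close'' to ``equal''.
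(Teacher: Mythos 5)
The paper does not actually prove this statement: it is imported verbatim from \cite[Theorem~0.1]{AFKKZ} and used as a black box, so the only meaningful comparison is with the original proof in that reference. Your skeleton does track that proof. The implication (ii)$\Rightarrow$(i) is trivial; your (iii)$\Rightarrow$(i) argument (surjective differential of $\Phi$ at a flexible point, hence open orbit by smoothness, hence a single orbit on the irreducible set $X^{\reg}$) is correct and complete; and (i)$\Rightarrow$(iii) via identifying the tangent space of the $\SAut(X)$-orbit with the span $V_x$ of tangent vectors to $\G_a$-orbits is the right idea, but it silently relies on the nontrivial structure theorem (\cite[Proposition~1.5]{AFKKZ}) that orbits of a group generated by connected algebraic subgroups are locally closed smooth subvarieties whose tangent space is spanned by the tangent spaces of the generating one-parameter orbits. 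Without that, the claim that the orbit is ``locally the image of a single map $\Phi$'' is unjustified: the orbit is an increasing union of images of such maps over all word lengths and all choices of LNDs.

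The genuine gap is exactly where you flag it: the separation lemma, which is the mathematical content of infinite transitivity. The replica $f\partial_0$ with $f\in\ker\partial_0$ is indeed again an LND, but an $f\in\ker\partial_0$ separating $y_i$ from the remaining $y_j$ need not exist for a given $\partial_0$: if some $y_j$ lies in the closure of the $\partial_0$-orbit through $y_i$ (e.g. $\partial_0=\partial/\partial x$ on $\BA^2$ with $y_i=(0,0)$ and $y_j=(1,0)$), then every element of $\ker\partial_0$ takes the same value at both points. Saying that ``one must vary $\partial_0$ over a family of LNDs so that the joint kernels separate the points'' names the obstacle rather than overcoming it; in \cite{AFKKZ} this is the heart of the matter and requires first moving the tuple into general position using the already-established one-point transitivity together with saturation of the set of LNDs under replicas and conjugation (their Sections~2--4, culminating in the theorem on one-parameter subgroups with prescribed jets). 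The concluding ``arbitrarily close implies equal'' step would be fine once the separation lemma is available, since the $\SAut(X)$-orbit of an $m$-tuple is then open in the irreducible configuration space of distinct regular points (irreducible precisely because $\dim X\geq 2$). As written, however, the proposal establishes only (i)$\Leftrightarrow$(iii) modulo the orbit structure theorem and leaves the implication to (ii) --- the substantive assertion of the theorem --- unproved.
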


Given a class of affine varieties it is a natural question which varieties in this class are flexible. Let us mention some results in this area. Toric varieties, suspensions over flexible varieties, and cones over flag varieties are flexible, see  \cite{AKZ}. Affine $\mathrm{SL}_2$-embeddings and smooth affine varieties with locally transitive action of a semisimple group are flexible, see~\cite{AFKKZ}. Affine complexity-zero horospherical varieties of a semisimple group $G$ are flexible, see~\cite{Sh}. Recall that a {\it horospherical} variety is an irreducible variety with an action of an algebraic group $G$ such that the stabilizer of a generic point contains a maximal unipotent subgroup of~$G$. A horospherical variety is {\it complexity-zero} if $G$-action on it is locally transitive.

In this paper we prove that any normal complexity-zero horospherical variety $X$ of any connected linear group $G$ is flexible, see~ Theorem~\ref{las}. This theorem generalizes both the result of~\cite{Sh} and the result of \cite{AKZ} stating flexibility of toric varieties. Note that we do not use results of \cite{AKZ} and \cite{Sh} in our work. So we obtain new proofs of flexibility of toric varieties and flexibility of normal complexity-zero horospherical varieties of semi-simple groups as particular cases of Theorem~\ref{las}. Unlike the case of a semisimple group $G$, for an arbitrary $G$ the assumption that $X$ is normal cannot be waived. Indeed, there exists even toric variety, for example $\{x^2=y^3\}$, that is not normal and not flexible.  Examples of non-normal and non-flexible toric varieties of higher dimension can be found, for example, in \cite{G}.

It is well known that if there are two noncommuting $\mathbb{G}_m$-actions on an affine variety $X$, then there is a nontrivial $\mathbb{G}_a$-action on it. Moreover, one can achieve that the LND corresponding to this $\G_a$-action is homogeneous of nonzero degree with respect to one of the gradings corresponding to the $\G_m$-actions, see~\cite{FZ},~\cite{AGN}.  We prove that even the existence of a single $\mathbb{G}_m$-action satisfying certain condition implies existence of a nontrivial $\mathbb{G}_a$-action, see Proposition~\ref{po}.  This allows to study flexibility for some varieties with an algebraic group action. 
A connected algebraic group $G$ is generated by a maximal torus $T$ and one-dimensional unipotent subgroups. The subgroup of $\Aut(X)$ generated by all one-dimensional unipotent subgroups of $G$ is contained in $\SAut(X)$. Under the assumption that $X$ has finitely generated Cox ring we prove that  if there exists an $\SAut(X)$-invariant prime divisor, then there exists a prime divisor which is simultaneously $\SAut(X)$-invariant and $T$-invariant, see Proposition~\ref{main}. Let $X$ be an irreducible affine variety. Proposition~\ref{main} implies that $\SAut(X)$ has an open orbit in $X$, if a linear algebraic group $G$ acts on  $X$ with an open orbit $U$ such that $\mathrm{codim}_XU\geq 2$. This implies that $X$ possesses a  flexible point.

As we have mentioned above, a smooth irreducible affine variety $X$  is flexible,  if a semisimple group $G$ acts on it with an open orbit, see \cite[Theorem~5.6]{AFKKZ}. We generalize this result by proving that if a reductive group $G$ acts locally transitive on a smooth affine variety $X$ with only constant invertible functions, then $X$ is flexible, see Theorem~\ref{sm}. It is interesting to note that for a linear algebraic group $G$ that is not reductive this assertion fails, see Example~\ref{ce}.

Let us denote by $H\subseteq \Aut(X)$ the group generated by $\SAut(X)$ and the torus $T$. We prove that  $X$ is flexible whenever $H$ acts transitively on $X^{\reg}$. The latter condition is satisfied, for instance, in the following case. For every $G$-orbit $O\subseteq X$ consisting of regular points there is a non-hyperbolic $\G_m$-subgroup $\Lambda$ in $\Aut(X)$ such that the set of $\Lambda$-fixed points lies in the closure $\overline{O}$ and contains a regular point, see Proposition~\ref{ot}. Applying this proposition we prove flexibility of normal complexity-zero horospherical varieties.

The authors are grateful to Ivan Arzhantsev, Roman Budylin and Alexander Gaifullin for useful discussions. 

\section{preliminaries}\label{pre}

\subsection{Cox rings}

Here we recall a definition and basic properties of the Cox ring of an affine algebraic variety $X$. All proofs can be found in \cite{ADHL}. 

Let $X$ be a normal irreducible variety. Recall that a {\it prime divisor} on $X$ is an ireducible closed subset of codimension one. Consider the group $\mathrm{WDiv}(X)$ of {\it Weil divisors} on $X$, which is the free commutative group generated by prime divisors. We say that a Weil divisor $E$ is effective and write $E\geq 0$, if all coefficients of  prime divisors are nonnegative. To a nonzero rational function $f\in \KK(X)$ one can assign the divisor of zeros and poles $\div(f)\in\mathrm{WDiv}(X)$. Such divisors are called {principle}. Principle divisors form a subgroup $\mathrm{PDiv}(X)\subseteq \mathrm{WDiv}(X)$. The quotient group $\Cl(X)= \mathrm{WDiv}(X)/\mathrm{PDiv}(X)$ is called the divisor class group.

For a Weil divisor $D$ on $X$, one can consider the associated vector space

 $$\CL(X, D) =\left \{ f \in \KK(X)\setminus\{0\}\mid\mathrm{div} (f) + D\geq 0\} \cup \{0 \right\}.$$
\begin{de}
The \emph{divisorial algebra} associated to a subgroup $K \subseteq  \mathrm{WDiv} (X)$ is the $K$-graded $\KK[X]$-algebra

$$\mathcal{S}_K = \bigoplus_{D\in K} \mathcal{S}_D, \ \ \ \mathcal{S}_D = \CL(X, D),$$
where the multiplication in $\mathcal{S}_K$ is defined on homogeneous elements as follows. If $f_1 \in\mathcal{S}_{D_1}$ and $f_2 \in\mathcal{S}_{D_2}$, then their product in $\mathcal{S}_K$ is the usual product $f_1f_2$ regarded as an element of $\mathcal{S}_{D_1+D_2}$. For arbitrary elements the multiplication is defined by the distributive law. 
\end{de}

Since $\CS_K$ is $K$-graded, we have the corresponding action of the torus $T=\Spec \KK[K]$ on $\mathcal{S}_K$ and $\CS_K^T= S_{0}\cong\KK[X]$.

\begin{de}\label{coxring} Let $X$ be a normal variety with only constant regular invertible functions and finitely generated divisor class group $\Cl(X)$. Let $K$ be a subgroup of $\mathrm{WDiv} (X)$ such that the map $\pi\colon K \rightarrow \Cl (X)$ taking every $D\in K$ to its class is surjective. Let $K^0$ be the kernel of this map, and let $\chi \colon K^0 \to \KK(X)^\times$ be a character (i.e. a group homomorphism to multiplicative group of the field $\KK(X)$) satisfying 
$$\div(\chi(E)) = E\ \ \ \mbox{for all } E\in K^0.$$
Let $\CS_K$ be the divisorial algebra associated to $K$. Denote by $I$ the ideal of $\mathcal{S}_K$ generated by the functions $1 - \chi(E)$,  $E$ runs through $K^0$, where $1$ is homogeneous of degree zero and $\chi(E)$ is homogeneous of degree~$-E$. The \emph{Cox ring} is the quotient ring $\CR_{K, \chi} = \CS_K/I$. 
The algebra $\CS_K$ is $\Cl (X)$-graded and the ideal $I$ is homogeneous with respect to this grading. So the Cox ring is also $\Cl (X)$-graded. 

The definition of the Cox ring is independent of the choice of $K$ and $\chi$. So we denote the Cox ring of $X$ simply by $\CR (X)$.

\end{de}

Let $X$ be a normal affine variety with only constant invertible functions and a finitely generated divisor class group. Suppose that the Cox ring $\CR(X)$ is also finitely generated. Consider the \emph{total coordinate space} $\overline{X} = \Spec \CR (X)$.
Since $\CR (X)$ is $\Cl (X)$-graded, we have a corresponding action of the  \emph{Neron-Severi} quasitorus $N(X) = \Spec \KK[\Cl (X)]$ on $\overline{X}$. The total coordinate space $\overline{X}$ is a normal irreducible affine variety. The variety $X$ is isomorphic to the categorical quotient $\overline{X}/\!/ N(X)$. The quotient morphism $\pi\colon \overline{X}\rightarrow X$ is called {\it the Cox realization} of $X$. 

A normal projective variety with finitely generated Cox ring is called a {\it Mori dream space}. We introduce the following terminology. A normal affine variety with  finitely generated Cox ring will be called an {\it affine Mori dream space} (AMDS).

\begin{de}\cite[Definition~4.2.3.1]{ADHL}
Let $X$ be an AMDS. Let $\pi\colon \overline{X}\rightarrow X$ be the Cox realization of $X$. Given an algebraic group action $\mu\colon G\times X\rightarrow X$.  We say that a finite epimorphism $\varepsilon\colon G'\rightarrow G$ and an action $\mu'\colon G'\times \overline{X}\rightarrow \overline{X}$ {\it lift} the $G$-action to the Cox realization, or call $(\varepsilon, \mu' )$ a {\it lifting} of $\mu$ if

1) the actions of $G'$ and $N(X)$ on $\overline{X}$ commute, 

2) the epimorphism $\varepsilon$ satisfies  $\pi(g'\cdot \overline{x})=\varepsilon(g')\cdot \pi(\overline{x})$ for all $g'\in G'$, $\overline{x}\in\overline{X}$.
\end{de}

We need the following assertion, which is a particular case of \cite[Theorem~4.2.3.2]{ADHL}.

\begin{prop}\label{AD}
Let $X$ be an AMDS. Let $\pi\colon \overline{X}\rightarrow X$ be the Cox realization of $X$.  Let $G$ be a connected affine algebraic group acting on $X$. Then 

1) There exist a finite epimorphism $\varepsilon\colon G'\rightarrow G$ of connected affine algebraic groups and an action $\mu'\colon G'\times \overline{X}\rightarrow\overline{X}$ that lift the $G$-action to the Cox realization.

2) Given a finite epimorphism $\varepsilon\colon G'\rightarrow G$  and two $G'$-actions $\circ$ and~$*$ on $\overline{X}$ that provide liftings of $G\times X\rightarrow X$, there is a homomorphism $\eta\colon G'\rightarrow N(X)$ with $g'*\overline{x}=\eta(g')(g'\circ \overline{x})$ for all $g'\in G'$ and $\overline{x}\in\overline{X}$.

3) If $G$ is a simply connected semisimple group then there is a lifting $(\mathrm{id}_G,\mu')$ of the $G$-action to the Cox realization.
 
4) If $G$ is a torus, then there is a lifting $(\varepsilon,\mu' )$ of the $G$-action to the Cox realization with $\varepsilon\colon G\rightarrow G, g\mapsto g^b$ for some $b\in\mathbb{Z}_{\geq 1}$; if in addition $\Cl(X)$ is free, then one may choose $b = 1$.
\end{prop}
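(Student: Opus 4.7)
The plan is to use the $\Cl(X)$-grading on $\CR(X)$ to translate each assertion into a lifting question for algebraic groups, exploiting that $G$ is connected in order to trivialize the induced action on $\Cl(X)$.

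First I would observe that since $G$ is connected and $\Cl(X)$ is finitely generated and therefore discrete, the induced action $g\cdot [D]=[g\cdot D]$ on $\Cl(X)$ is trivial. Choose a finitely generated subgroup $K\subseteq\WDiv(X)$ surjecting onto $\Cl(X)$: then for each $g\in G$ and each $D\in K$ the divisor $g^{-1}\cdot D$ is linearly equivalent to $D$, so pullback by $g$ sends $\CL(X,g^{-1}\cdot D)$ to $\CL(X,D)$ up to multiplication by a rational function whose divisor is principal. Since $\KK[X]^\times=\KK^\times$, this ambiguity is a scalar, and assembling the isomorphisms over all degrees yields, up to the natural action of $N(X)=\Spec\KK[\Cl(X)]$ on homogeneous components, a $\Cl(X)$-graded algebra automorphism of $\CS_K$ that descends to $\CR(X)=\CS_K/I$. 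In this way we obtain a homomorphism $\rho\colon G\to \mathrm{Aut}_{\Cl(X)}(\CR(X))/N(X)$.

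For part (1), I would promote $\rho$ to an algebraic morphism and lift it. Pick a finite-dimensional, $G$-stable, $\Cl(X)$-homogeneous subspace $V\subseteq\CR(X)$ that generates $\CR(X)$ as a $\KK$-algebra; such $V$ exists by finite generation of $\CR(X)$, $G$-equivariance of the grading, and local finiteness of the $G$-action on each graded piece of $\CS_K$. The stabilizer $H\subseteq\GL(V)$ of the grading and of the defining relations of $\CR(X)$ is then an algebraic group, $\rho$ factors through an algebraic morphism $G\to H/N(X)$, and pulling back the central extension $1\to N(X)\to H\to H/N(X)\to 1$ along it gives a central extension $1\to N(X)\to \widetilde G\to G\to 1$ of algebraic groups. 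Because $N(X)$ is diagonalizable, such an extension is induced from one by a finite subgroup of $N(X)$; taking that pushout data produces a connected algebraic group $G'$ with a finite epimorphism $\varepsilon\colon G'\to G$ and the desired lift $\mu'$, while commutation with $N(X)$ and compatibility with $\pi$ are built into the graded formalism.

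Parts (2)--(4) then follow by formal arguments. For (2), two liftings $\circ$ and $*$ produce, for each $g'\in G'$, two automorphisms of $\overline X$ reducing to the same automorphism $\varepsilon(g')\cdot$ of $X$; they must differ by a unique $\eta(g')\in N(X)$, and expanding $(g_1'g_2')*\overline x$ in two ways together with commutativity of $N(X)$ shows that $\eta$ is a group homomorphism. For (3), a simply connected semisimple algebraic group over $\KK$ has vanishing Picard group and no nontrivial central isogeny, so the extension $\widetilde G$ splits and one may take $G'=G$ with $\varepsilon=\mathrm{id}$. For (4) with $G$ a torus, $\widetilde G$ is diagonalizable, and dualizing the extension to character lattices gives a short exact sequence of finitely generated abelian groups which splits after multiplication by some $b\geq 1$, with $b=1$ when $\Cl(X)$ is free so that $N(X)$ is a torus with free character lattice. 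The main obstacle is the algebraicity step in (1): producing the $G$-stable generating subspace $V$ on which $G$ acts by a rational representation, and justifying that the central extension $\widetilde G$ may be reduced to one by a finite subgroup of $N(X)$ so that $G'\to G$ is a genuine finite epimorphism rather than, say, an infinite cover by a quasitorus. Once these technical points are handled via standard facts on rational $G$-modules and the structure theory of diagonalizable groups, the remainder of the proposition follows by routine manipulations.
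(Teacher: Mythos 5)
The paper does not actually prove this proposition: it is quoted as a special case of \cite[Theorem~4.2.3.2]{ADHL}, so there is no in-paper argument to compare against. Your sketch does follow the same overall strategy as the proof in ADHL --- triviality of the action of the connected group $G$ on the discrete group $\Cl(X)$, a graded lift of each automorphism well defined modulo $N(X)$ because $\KK[X]^\times=\KK^\times$, a central extension $1\to N(X)\to\widetilde G\to G\to 1$, and structure theory of diagonalizable groups to extract a finite epimorphism $G'\to G$; your treatments of (2)--(4) are essentially the standard ones.

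However, the step you yourself flag as ``the main obstacle'' is a genuine gap, and it is the heart of the theorem. To choose a finite-dimensional $\Cl(X)$-homogeneous generating subspace $V\subseteq\CR(X)$ stable under all the lifted automorphisms $\rho(g)$, you already need the (projective) $G$-action on $\CR(X)$ to be locally finite and to depend algebraically on $g$ --- but that is exactly what the algebraicity step is supposed to establish, so the argument as written is circular. (A single automorphism of a finitely generated algebra need not act locally finitely, and the $\Cl(X)$-graded components of $\CR(X)$ are in general infinite-dimensional, so $V$ cannot simply be taken as a sum of graded pieces.) In ADHL this is resolved by a different mechanism: one $G$-linearizes the divisorial sheaves $\mathcal{O}_X(D)$ after replacing $G$ by a suitable finite cover, using that the obstruction to linearization is controlled by $\Pic(G)$ and by $\KK[G]^\times/\KK^\times$; this produces the morphism $G'\times\overline X\to\overline X$ directly rather than recovering it from an abstract homomorphism into an automorphism group. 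Two smaller inaccuracies: in (3) the relevant fact is not the vanishing of $\Pic(G)$ as such, but that the derived subgroup of $\widetilde G^{\,0}$ maps onto $G$ by a central isogeny, which must be an isomorphism since $G$ is simply connected; and in (4) you must first justify that $\widetilde G$ is commutative --- the commutator induces a bimultiplicative map $T\times T\to N(X)$, which is trivial by rigidity of diagonalizable groups and connectedness of $T$ --- before concluding that $\widetilde G$ is diagonalizable and dualizing to character lattices.
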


\subsection{Flexibility} 
There is a relationship between flexibility of a variety and $\SAut(X)$-invariants on it.
Recall that the {\it Makar-Limanov invariant} $\ML(X)$ of an affine variety $X$ is the intersection of the kernels of all locally nilpotent derivations on X. 
In other words, $\ML(X)$ is the subalgebra of $\KK[X]$ consisting of all $\SAut(X)$-invariants. The subfield $\FML(X)$ in the field $\KK(X)$ of rational functions on $X$ consisting of all rational $\SAut(X)$-invariants is called {\it field Makar-Limanov invariant}. 

\begin{prop}\label{fml}\cite[Proposition 5.1.]{AFKKZ}
An irreducible affine variety $X$ possesses a flexible point if and only if the group $SAut(X)$ acts on $X$ with an open orbit and if and only if the field Makar-Limanov invariant $\FML(X)$ is trivial, i.e. $\FML(X)=\KK$. 
\end{prop}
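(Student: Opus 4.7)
I would prove the cycle of implications (a) $X$ has a flexible point $\Rightarrow$ (b) $\SAut(X)$ has an open orbit $\Rightarrow$ (c) $\FML(X)=\KK$ $\Rightarrow$ (a), where the first two links are routine and the difficulty concentrates in closing the cycle. For (a)$\Rightarrow$(b), pick a flexible $x_0\in X^{\reg}$ and LNDs $\partial_1,\ldots,\partial_n$ with $\partial_i(x_0)$ spanning $T_{x_0}X$. The morphism
$$\Phi\colon \G_a^n\to X,\qquad (t_1,\ldots,t_n)\mapsto \bigl(\exp(t_n\partial_n)\circ\cdots\circ\exp(t_1\partial_1)\bigr)(x_0),$$
satisfies $d\Phi_0(e_i)=\partial_i(x_0)$ and is therefore dominant; by Chevalley its image contains a Zariski-open $U\subseteq X$, and $U\subseteq\SAut(X)\cdot x_0$ shows that this orbit is open.

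For (b)$\Rightarrow$(c), let $O$ be the open $\SAut(X)$-orbit. For every one-parameter unipotent subgroup $H_\partial=\{\exp(t\partial)\}$ in $\Aut(X)$ one has $\KK(X)^{H_\partial}=\frk(\Ker\partial)$, so $\FML(X)=\bigcap_\partial\frk(\Ker\partial)$ is precisely the field of rational $\SAut(X)$-invariants. Any such invariant is constant along each $\G_a$-orbit where defined, hence along every chain of $\G_a$-orbits, hence along $O$, and since $O$ is dense in $X$, on all of $X$.

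The main obstacle is the closing step (c)$\Rightarrow$(a). Assume $\FML(X)=\KK$. By Rosenlicht, $\frk(\Ker\partial)$ has transcendence degree $\dim X-1$ over $\KK$ for each nonzero LND $\partial$, so beginning from any $\partial_1$ and iteratively adjoining $\partial_{i+1}$ that strictly shrinks $\bigcap_{j\leq i}\frk(\Ker\partial_j)$ terminates in at most $\dim X$ steps, producing $\partial_1,\ldots,\partial_N$ with $\bigcap_i\frk(\Ker\partial_i)=\KK$. A Rosenlicht-type argument for the subgroup of $\Aut(X)$ generated by $H_{\partial_1},\ldots,H_{\partial_N}$ then forces a generic orbit to be Zariski-dense in $X$. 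Picking such an $x_0$ and writing this orbit as the union of images of iterated flow compositions $\Phi_k\colon \G_a^{N_k}\to X$ through $x_0$, at least one $\Phi_k$ must be dominant, so by Chevalley its image contains an open $U\subseteq X$. At any regular $y\in U$ where $d\Phi_k$ is surjective, the image of the differential is spanned by values at $y$ of LNDs obtained from $\partial_1,\ldots,\partial_N$ by conjugation by appropriate subproducts of the flows $\exp(t_j\partial_{i_j})$; conjugates of LNDs by regular automorphisms are again LNDs, so $y$ is flexible.

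The delicate technical point throughout this last step is that $\SAut(X)$ is not an algebraic group, so the Rosenlicht quotient cannot be invoked for it directly: the reduction to finitely many LNDs $\partial_1,\ldots,\partial_N$ and the analysis of the (still non-algebraic) group they generate through its finitely parametrized partial orbit maps $\Phi_k$ is what carries the weight of the argument. Once generic density of some such orbit is established, the passage to dominance via dimension counting, then to an open $U$ via Chevalley, and finally to flexibility via the differential and the conjugation-stability of the LND property is essentially automatic.
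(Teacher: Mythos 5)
The paper does not prove this statement at all: it is imported verbatim as Proposition~5.1 of \cite{AFKKZ}, so there is no internal proof to compare against. Your cycle (a)$\Rightarrow$(b)$\Rightarrow$(c)$\Rightarrow$(a) is the right architecture and is essentially how \cite{AFKKZ} organizes things. The implications (a)$\Rightarrow$(b) and (b)$\Rightarrow$(c) are essentially complete as you write them (modulo the routine translation argument showing that an orbit of a group generated by connected algebraic subgroups is open as soon as it contains a nonempty open set, and the standard identity $\KK(X)^{H_\partial}=\frk(\Ker\partial)$).

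The gap is in (c)$\Rightarrow$(a), and it is genuine on two counts. First, your termination claim for the chain $L_i=\bigcap_{j\leq i}\frk(\Ker\partial_j)$ does not follow as stated: a strictly decreasing chain of subfields of $\KK(X)$ need not terminate (consider $\KK(x)\supsetneq\KK(x^2)\supsetneq\KK(x^4)\supsetneq\cdots$), and knowing that each single $\frk(\Ker\partial)$ has transcendence degree $\dim X-1$ does not make the intersections drop in transcendence degree. You need the additional observation that each $\frk(\Ker\partial)=\KK(X)^{H_\partial}$ is algebraically closed in $\KK(X)$ (the $H_\partial$-orbit of an element algebraic over the invariant field is finite, hence a single point since $H_\partial$ is connected); then every $L_i$ is algebraically closed in $\KK(X)$, a strict inclusion $L_{i+1}\subsetneq L_i$ forces $\td L_{i+1}<\td L_i$, and only then do you get termination in at most $\dim X$ steps. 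Second, and more seriously, the sentence ``a Rosenlicht-type argument for the subgroup generated by $H_{\partial_1},\ldots,H_{\partial_N}$ then forces a generic orbit to be Zariski-dense'' is not a proof: this subgroup of $\Aut(X)$ is not algebraic, so classical Rosenlicht does not apply to it, and the statement you actually need --- that rational invariants of an algebraically generated subgroup of $\Aut(X)$ separate orbits in general position --- is precisely the content of Theorem~1.13 of \cite{AFKKZ}, whose proof occupies their Section~1 and rests on nontrivial structure results (orbits of such groups are locally closed; a general orbit is the image of a single composed flow map $\Phi_w$; the closures of the graphs of the $\Phi_w$ stabilize to a generic equivalence relation to which Rosenlicht's theorem can then be applied). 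You correctly identify this as the step ``carrying the weight of the argument,'' but you only name it, so as written the implication $\FML(X)=\KK\Rightarrow X$ possesses a flexible point is asserted rather than proved. By contrast, your concluding step is fine: at a general regular point of the image of a dominant $\Phi_k$ the differential is surjective in characteristic zero, its image is spanned by values of conjugated locally nilpotent derivations, and conjugates of LNDs by automorphisms are again LNDs, so such a point is flexible.
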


The following lemma is well known, see for example \cite{Dz}.

\begin{lem}\label{D}
Let $X$ be an irreducible normal affine variety. Suppose that $\FML(X)\neq \KK$. Then there exists an $\SAut(X)$-invariant prime divisor $D\subseteq X$.
\end{lem}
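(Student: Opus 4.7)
The plan is to produce the invariant divisor from a non-constant $\SAut(X)$-invariant rational function $f\in\FML(X)$ by taking its divisor of zeros and poles, using connectedness of one-dimensional unipotent subgroups to promote set-theoretic invariance of $\supp\div(f)$ to invariance of each prime component.

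First, I would arrange that $\div(f)\neq 0$. If $\div(f)=0$ then $f\in\KK[X]^{\times}$; since $f$ is non-constant, the morphism $f\colon X\to\BA^1$ is dominant, and for $c$ in its image the function $f-c$ has a non-trivial zero divisor. Because $\SAut(X)$ acts by $\KK$-algebra automorphisms, $f-c$ remains $\SAut(X)$-invariant, so I replace $f$ by $f-c$. Write $\supp\div(f)=D_1\cup\cdots\cup D_k$ as the union of its prime components. Since $g^*f=f$ forces $g^*\div(f)=\div(f)$, every $g\in\SAut(X)$ permutes the finite set $\{D_1,\ldots,D_k\}$.

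The main step is upgrading this permutation action to a trivial one. Since $\SAut(X)$ is generated by its one-parameter unipotent subgroups $U\cong\G_a$, fix such a $U$ and an index $i$, and consider the restriction of the action morphism, $\mu\colon U\times D_i\to X$, $(u,x)\mapsto u\cdot x$. The image of $\mu$ is $\bigcup_{u\in U}u\cdot D_i$, which is contained in $D_1\cup\cdots\cup D_k$ and is therefore a finite union of closed prime divisors---in particular closed. On the other hand $U\times D_i$ is irreducible, so its image is irreducible. A closed irreducible union of prime divisors must be a single prime divisor, and since it contains $D_i$ it must equal $D_i$. Hence $u\cdot D_i\subseteq D_i$, and by dimension $u\cdot D_i=D_i$ for every $u\in U$, giving $\SAut(X)$-invariance of $D_i$.

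The main obstacle is the preliminary shift-by-$c$ step, needed precisely because for varieties admitting non-constant invertible regular functions a non-constant $f$ may have $\div(f)=0$; once a non-trivial $\div(f)$ is in hand, the argument reduces to the general principle that a connected algebraic group cannot permute a finite collection of prime divisors non-trivially.
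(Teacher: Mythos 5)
Your proposal is correct and follows essentially the same route as the paper: take $\div(f)$ for a non-constant $f\in\FML(X)$, observe it is $\SAut(X)$-invariant, and use connectedness of the one-parameter unipotent subgroups to show that the induced permutation of the prime components is trivial. The only additions are the preliminary shift $f\mapsto f-c$ handling the degenerate case $\div(f)=0$ (i.e.\ $f$ a non-constant invertible regular function), which the paper's proof leaves implicit, and a more explicit irreducibility argument for why a connected group cannot permute the components $D_i$ non-trivially; both are sound refinements rather than a different approach.
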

\begin{proof}
Suppose that $f\in\mathrm{FML}(X)\setminus \KK$. Then $\div(f)$ is an $\SAut(X)$-invariant divisor. Let $\div(f)=\sum_{i=1}^na_iD_i$ be the decomposition of $\div(f)$ into prime divisors ($a_i\neq 0$). Let $\Omega\subseteq \mathrm{SAut}(X)$ be a subgroup isomorphic to $\G_a$. Then every $\omega\in\Omega$ permutes the divisors $D_i$, which yields the homomorphism $\tau\colon\Omega\rightarrow \mathrm{S}_n$. Since $\Omega\cong \G_a$ is a connected group, we obtain that $\tau$ is trivial. This implies that each $D_i$ is $\Omega$-invariant.
\end{proof}

\subsection{Horospherical varieties}

We recall some results on horospherical varieties, all proofs can be found in \cite{PV}, see also \cite{T}.

Let $G$ be a connected linear algebraic group. 

\begin{de}
An irreducible $G$-variety $X$ is called \emph{horospherical}, if for a generic point $x\in X$ the stabilizer of $x$ contains a maximal unipotent subgroup $U\subseteq G$.

If $X$ contains an open $G$-orbit, then $X$ is called \emph{complexity-zero horospherical}. In \cite{PV} affine complexity-zero horospherical varieties are called $S$-varieties.
\end{de}

Suppose that $X$ is an affine complexity-zero horospherical variety. It is easy to see that the unipotent radical of $G$ acts trivially on $X$. Hence we may assume that $G$ is reductive. Taking a finite covering, we may assume that $G=T\times G'$, where $T$ is an algebraic torus and $G'$ is a semisimple group.

Let $O$ be the open orbit in $X$. We have the following sequence of inclusions
$$
\KK[X]\hookrightarrow\KK[O]\hookrightarrow\KK[G].
$$

Let $B$ be a Borel subgroup of $G$ and let $\mathfrak{X}(B)$ be the group of characters of $B$.   
For a $\Lambda\in\mathfrak{X}(B)$ we put
$$
S_\Lambda=\left\{ f\in\KK[G]\mid f(gb)=\Lambda(b)f(g)\ \text{for all} \ g\in G, b\in B\right\}.
$$
Then
$$
S_{\Lambda} S_{M}=S_{\Lambda+M}.
$$

The set $\mathfrak{X}^+(B)$ of dominant weights consists of all $\Lambda$ such that $S_\Lambda\neq\{0\}$. 
It is proved in \cite{PV} that for an affine complexity-zero horospherical $G$-variety $X$ there is a decomposition
$$
\KK[X]=\bigoplus_{\Lambda\in P} S_\Lambda
$$
for some subsemigroup $P\in\mathfrak{X}^+(B)$. 

Let us consider a $\mathbb{Q}$-vector space $V=\mathfrak{X}^+(B)\otimes_\mathbb{Z} \mathbb{Q}$. 
Denote by $\sigma$ the cone in $V$ spanned by all elements of the semigroup $P$. The variety $X$ is normal if and only if $P$ is saturated, i.e. 
$\mathbb{Z}P\cap \sigma =P$. There is a one-to-one correspondence between faces of $\sigma$ and $G$-orbits of $X$. More precisely, if
$Y\subseteq  X$ is a $G$-orbit in $X$ then there is a corresponding face $\tau$ of the cone $\sigma$ such that the ideal of functions vanishing on $Y$ has the form 
$$
I(Y)=\bigoplus_{\Lambda\in P\setminus\tau}S_\Lambda.
$$

\section{Non-hyperbolic $\G_m$-actions and locally nilpotent derivations}

Let $X$ be a normal affine irreducible variety. Regular actions of a one-dimensional torus $\G_m$ on $X$ correspond to $\ZZ$-gradings of the algebra of regular functions
$$\KK[X]=\bigoplus_{i\in\ZZ}\KK[X]_i.$$ 
If there exist $i>0$ and $j<0$ such that both homogeneous components $\KK[X]_i$ and $\KK[X]_j$ are nonzero, then the $\G_m$-action is called {\it hyperbolic}. Suppose a $\G_m$-action is not hyperbolic. Then we may assume $\KK[X]=\bigoplus_{i\geq 0}\KK[X]_i$. If $\KK[X]_0=\KK$, then the $\G_m$-action is {\it elliptic}. If $\KK[X]_0\neq\KK$, then the $\G_m$-action is {\it parabolic}. Further all elliptic and parabolic actions are called  {\it non-hyperbolic}. We assume that $\KK[X]_i=\{0\}$ for all $i<0$ in this case.

Let us concider a non-hyperbolic $\G_m$-action. We have a natural $\G_m$-invariant ideal $I=\bigoplus_{i>0}\KK[X]_i$ in $\KK[X]$. Let $Z$ be the zero locus of $I$. Then for every $x\in X$ there exists $\lim_{t\rightarrow 0}t\cdot x\in Z$.  The variety $Z$ is the set of $\G_m$-fixed points. 

\begin{prop}\label{po}
Let $Z$ be the set of $\G_m$-fixed points for a non-hyperbolic $\G_m$-action on a normal affine irreducible variety $X$. Assume $Z\cap X^{\reg}\neq \varnothing$. Then $Z^{\reg}\cap X^{\reg}\neq \varnothing$ and for every $z\in Z^{reg}\cap X^{\reg}$ the tangent space $T_zX$ is spanned by $T_zZ$ and tangent vectors to orbits of regular $\mathbb{G}_a$-actions on~$X$.
\end{prop}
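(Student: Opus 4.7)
The plan is to handle the two claims separately, both through local analysis at $z$ based on the $\ZZ_{\geq 0}$-grading on $A=\KK[X]$.

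For the first claim, I would show that any $z\in Z\cap X^{\reg}$ automatically lies in $Z^{\reg}$. Since $\{z\}$ is $\G_m$-stable, the maximal ideal $\mathfrak{m}_z$ is graded, hence so is $\mathfrak{m}_z/\mathfrak{m}_z^2$. Choose a graded basis and lift it to homogeneous parameters $y_1,\ldots,y_n\in\mathfrak{m}_z$; by Cohen's structure theorem, $\widehat{\CO}_{X,z}\cong\KK[[y_1,\ldots,y_n]]$ as graded rings. The ideal $I=\bigoplus_{i>0}A_i$ of $Z$ maps onto the positive-weight subspace of $\mathfrak{m}_z/\mathfrak{m}_z^2$, so a graded Nakayama argument shows its extension to the completion is generated by the positive-weight $y_i$'s. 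Consequently $\widehat{\CO}_{Z,z}$ is the power series ring in the weight-zero $y_i$'s, so $z\in Z^{\reg}$, and $T_zZ$ is identified with the weight-zero subspace $(T_zX)_0$.

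For the second claim, fix $z\in Z^{\reg}\cap X^{\reg}$; by the first part $T_zX=T_zZ\oplus V_+$ with $V_+=\bigoplus_{d>0}(T_zX)_d$. Using the standard fact that the lowest-shift homogeneous component of an LND is itself an LND, and noting that a homogeneous derivation of strictly negative shift on a $\ZZ_{\geq 0}$-graded finitely generated algebra is automatically locally nilpotent (each element has bounded degree), it is enough to produce, for each $d>0$ and each nonzero cotangent class $\bar{f}\in(T_zX)_d^{*}$ represented by a homogeneous $f\in A_d$, an LND $\partial$ on $A$ homogeneous of shift $-d$ with $\partial(f)(z)\neq 0$. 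The velocity vector of such a $\partial$ at $z$ lies in $(T_zX)_d$ since $\partial(g)(z)=0$ for $g\in A_i$ with $i\neq d$, and letting $\bar{f}$ vary produces enough velocity vectors to span $(T_zX)_d$.

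The main obstacle is producing such homogeneous LNDs. A natural candidate comes from the formal completion: in $\widehat{\CO}_{X,z}\cong\KK[[y_1,\ldots,y_n]]$ the formal partial derivative $\partial/\partial y_i$ for a weight-$d$ parameter $y_i$ is a formal LND of shift $-d$ realizing any desired weight-$d$ cotangent direction. The real content is descending this formal derivation to a regular homogeneous derivation of $A$. My plan is to pick homogeneous generators of $A$ adapted to the parameters $y_1,\ldots,y_n$ and use the $\ZZ_{\geq 0}$-grading together with normality of $X$ to lift these formal partial derivatives to global regular derivations of $A$, with normality being crucial for the passage from local to global data. Once such LNDs are constructed for a basis of each positive weight space, the resulting $\G_a$-orbit velocity vectors at $z$, together with $T_zZ$, span $T_zX$.
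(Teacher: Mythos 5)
Your reduction is sound and matches the paper's overall strategy: identify $T_zZ$ with the weight-zero part of $T_zX$, note that a homogeneous derivation of strictly negative degree on the $\ZZ_{\geq 0}$-graded algebra $\KK[X]$ is automatically locally nilpotent, and reduce everything to producing, for each positive weight $d$ and each homogeneous local parameter $f\in I(Z)$ of weight $d$ at $z$, a homogeneous derivation $\partial$ of degree $-d$ with $\partial(f)(z)\neq 0$. Your treatment of the first claim is also fine (indeed stronger than needed; the paper merely observes that $Z$ is irreducible because $\KK[Z]\cong\KK[X]_0$ and intersects the two nonempty open sets $Z\cap X^{\reg}$ and $Z^{\reg}$).

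The gap is that the step you yourself call ``the real content'' is not carried out, and the mechanism you propose for it does not work as stated. A formal partial derivative $\partial/\partial y_i$ on $\widehat{\CO}_{X,z}\cong\KK[[y_1,\ldots,y_n]]$ has no reason to preserve the subring $\KK[X]$, and normality provides no descent of a formal derivation to a regular one: the difficulty is not local-to-global patching but that the formal object is not canonically attached to anything defined on $X$. What actually works (and is the entire content of the paper's Lemma~\ref{prd}) is different: form the polynomial subring $\mathcal{A}=\KK[X]_0[f_1,\ldots,f_n]$ generated by $\KK[X]_0$ and homogeneous lifts $f_1,\ldots,f_n$ of a basis of the positive-weight part of $\mathfrak{M}_z/\mathfrak{M}_z^2$; since $\KK[X]$ is algebraic over $\mathcal{A}$ and the characteristic is zero, $\partial/\partial f_j$ extends uniquely to a derivation $\KK[X]\rightarrow\KK(X)$, and one then clears denominators by an element $s_j\in\KK[X]_0$. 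The delicate point --- which your sketch would miss even if the descent were repaired --- is that one must arrange $s_j(z)\neq 0$; otherwise $\partial_j(f_j)(z)=s_j(z)=0$ and the resulting velocity vector at $z$ vanishes, so nothing is gained. The paper secures this by induction on the degree of homogeneous generators of $\KK[X]$, writing each generator of degree $u$ in terms of the $f_j$ modulo $\mathfrak{m}_zI(Z)+I(Z)^2$ and solving the resulting linear system in the localization at $\KK[X]_0\setminus\mathfrak{m}_z$, so that every denominator introduced lies outside $\mathfrak{m}_z$. Without this argument, or an equivalent one, the proof is incomplete.
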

\begin{proof}
We have $\KK[Z]\cong\KK[X]_0$. Therefore, $Z$ is irreducible. Two nonempty open subsets $X^{\reg}\cap Z$ and $Z^{\reg}$ have nonempty intersection. 
Let $z$ be a point in $Z^{\reg}\cap X^{\reg}$. Denote by $\mathfrak{m}_z$ the maximal ideal in $\KK[X]_0\cong\KK[Z]$ corresponding to $z$. Then the maximal ideal in $\KK[X]$ corresponding to $z$ is $\mathfrak{M}_z=\mathfrak{m}_z\oplus I(Z)$. Denote $\dim Z=k$, $\dim X=k+n$.  Since $z$ is a regular point of $Z$, we have $k=\dim Z=\dim (\mathfrak{m}_z/\mathfrak{m}_z^2)$. Let $h_1,\ldots,h_{k}\in \mathfrak{m}_z$ be functions such that their images in $\mathfrak{m}_z/\mathfrak{m}_z^2$ give a basis. Since $z$ is a regular point of $X$, we obtain
$$
n+k=\dim X=\dim \left(\mathfrak{M}_z/\mathfrak{M}_z^2\right).
$$ 
We have 
$$
\mathfrak{M}_z/\mathfrak{M}_z^2=(\mathfrak{m}_z\oplus I(Z))/(\mathfrak{m}_z\oplus I(Z))^2=\mathfrak{m}_z/\mathfrak{m}_z^2\oplus \left(I(Z)/(\mathfrak{m}_zI(Z)+I(Z)^2)\right).
$$ 
Therefore, we can take $f_1,\ldots,f_n\in I(Z)$ such that images of $\{h_1,\ldots h_k, f_1,\ldots, f_n\}$ give a basis of $\mathfrak{M}_z/\mathfrak{M}_z^2$.  
We can assume $f_1,\ldots, f_n$ to be $\ZZ$-homogeneous.
Then $h_1,\ldots h_k, f_1,\ldots,f_n$ are algebraically independent. Hence, $\{h_1,\ldots h_k\}$ is a transcendence basis of $\KK[X]_0$ and $f_1,\ldots,f_n$ are algebraically independent over $\KK[X]_0$. Let $\mathcal{A}=\KK[X]_0[f_1,\ldots,f_n]$. The $\ZZ$-grading of $\KK[X]$ induces a $\ZZ$-grading on $\mathcal{A}$.
\begin{lem}\label{prd}
Let $\delta$ be a homogeneous LND of $\mathcal{A}$ such that $\deg( \delta)<0$. Then there exists $s\in \KK[X]_0$ such that $s\delta$ can be extended to an LND of $\KK[X]$.
\end{lem}
\begin{proof}
Since $\KK[X]$ is an algebraic extension of $\mathcal{A}$, the derivation $\delta$ can be extended to a derivation $\KK[X]\rightarrow\KK(X)$, see \cite[Section~2.3]{VO}. 

Let $g_1,\ldots, g_m$ be homogeneous generators of $\KK[X]$. 
Let us prove that for each non negative integer $u$ there is $s_u\in\KK[X]_0$ such that $s_u\delta(g_i)\in\KK[X]$ for all $g_i$ in $\KK[X]_j$, where $j\leq u$. We proceed by induction on $u\in\ZZ_{\geq 0}$. 

{\it Base of induction, $u=0$.}
If $g_i\in\KK[X]_0$, then $\delta(g_i)=0$. So, we can take $s_0=1$.

{\it Step of induction.} Denote $\rho=s_{u-1}\delta$. If $g_i\in\KK[X]_u\subseteq I(Z)$, we have 
\begin{equation}\label{ee}
g_i=a_1f_1+\ldots+a_nf_n+F_i +G_i,
\end{equation}
where $a_j\in\KK$, $F_i\in \mathfrak{m}_z I(Z)$, $G_i\in I(Z)^2$. Let $g_{i_1},\ldots,g_{i_l}$ be all $g_i$ in $\KK[X]_u$. Then we can assume $F_i=p_1g_{i_1}+\ldots+p_lg_{i_l}$, where $p_j\in \mathfrak{m}_z$. Using equations of the form (\ref{ee}) we obtain 
$$
g_{i_l}=\frac{\sum_{j=1}^na_jf_j+F_{i_l}+p_1g_{i_1}+\ldots+p_{l-1}g_{i_{l-1}}}{1-p_l}. 
$$
Let us consider the localization $\mathcal{B}=\KK[X]_{(\KK[X]_0\setminus\mathfrak{m}_z)}$. We have

$$
g_{i_l}=\sum_{j=1}^n\tilde{a}_jf_j+\tilde{G_{i_l}} +\sum_{t=1}^{l-1}\tilde{p}_tg_{i_t},
$$
where $\tilde{a}_j=\frac{a_j}{1-p_l} \in\mathcal{B}$, $\tilde{G_{i_l}}=\frac{G_{i_l}}{1-p_l}\in I(Z)^2_{(\KK[X]_0\setminus \mathfrak{m}_z)}$, $\tilde{p_t}=\frac{p_t}{1-p_l}\in {\mathfrak{m}_z}_{(\KK[X]_0\setminus \mathfrak{m}_z)}$.

Then we can substitute this to all the other equations of the form~(\ref{ee}). And so on until we obtain an equation with only one $g_{i_1}$.
$$g_{i_1}=\sum_{j=1}^n \widehat{a}_jf_j+\widehat{G},$$ 
where $\widehat{a}_j\in\mathcal{B}$,  $\widehat{G}\in I(Z)^2_{(\KK[X]_0\setminus \mathfrak{m}_z)}$. 
Hence,  
$$g_{i_1}=\frac{\sum_{j=1}^n \overline{a}_jf_j+\overline{G}}{P},$$
where $\overline{a}_j\in\KK[X]_0$, $\overline{G}\in I(Z)^2$, $P\in\KK[X]_0\setminus \mathfrak{m}_z$.
Therefore, 
$$
\rho(g_{i_1})=\frac{\sum_{j=1}^n \overline{a}_j\rho(f_j)+\rho(\overline{G})}{P}.
$$
Hence, $P\rho(g_{i_1})\in\KK[X]$. Analogically we obtain functions $P_1=P, P_2,\ldots P_l$ in $\KK[X]_0\setminus \mathfrak{m}_z$ such that $P_j\rho(g_{i_j})\in\KK[X]$. Then we can take $s_u=\left(\prod_{j=1}^l P_j\right)s_{u-1}$. 

Since the number of $g_i$ is finite, we can take 
$$u_0=\max\{u\mid \exists g_i\in \KK[X]_{u_0}\}.$$ 
Then for $s=s_{u_0}$ there exists a derivation $\partial$ of $\KK[X]$, which extends $s\delta$. It is easy to see, that $\deg \partial=\deg\delta$.
Since $\deg \partial<0$, it is locally nilpotent. Lemma~\ref{prd} is proved. 
\end{proof}

Let us consider $n$ LNDs 
$\delta_1=\frac{\partial}{\partial f_1},\ldots, \delta_n=\frac{\partial}{\partial f_n
}$ of $\mathcal{A}$.
We have $n$ LNDs $\partial_1,\ldots, \partial_n$ on $\KK[X]$, which are extensions of derivations $s_1\frac{\partial}{\partial f_1}, \ldots, s_n \frac{\partial}{\partial f_n}$ of $\mathcal{A}$.  Let us denote $\varphi_j(t)=\exp(t\partial_j)$. We have
$$
T_zX\cong \left(\mathfrak{M}_z/\mathfrak{M}_z^2\right)^*=\langle h_1+\mathfrak{M}_z^2,\ldots, h_k+\mathfrak{M}_z^2,f_1+\mathfrak{M}_z^2,\ldots, f_n+\mathfrak{M}_z^2\rangle^*.
$$ 
By definition $\partial_j(h_p)=0$, $\partial_j(f_j)=s_j$, $\partial_j(f_i)=0$ for all $i\neq j$. Since $s_j\in\KK[X]_0\setminus \mathfrak{m}_z$, we obtain $\partial_j(h_p)(z)=0$, $\partial_j(f_j)(z)\neq 0$, $\partial_j(f_i)(z)=0$ for all $i\neq j$.
That is tangent vector of $\varphi_j(t)\cdot z$ is proportional to the dual basis vector to $f_j+\mathfrak{M}_z^2$. Therefore, dual basis vectors to $h_i+\mathfrak{M}_z^2$ are in $T_zZ$ and dual basis vectors to $f_j+\mathfrak{M}_z^2$ are  tangent vectors to $\mathbb{G}_a$-orbits. Proposition~\ref{po} is proved.
\end{proof}

Proposition \ref{po} implies the following two assertions. 

\begin{cor}\label{sle}
Under the assumptions of Proposition~\ref{po}, the set $Z$ is not $\SAut(X)$-invariant.
\end{cor}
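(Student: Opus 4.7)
The plan is to argue by contradiction, leveraging the tangent-space description furnished by Proposition~\ref{po}. Suppose $Z$ were $\SAut(X)$-invariant. First I would pick a point $z\in Z^{\reg}\cap X^{\reg}$, which is available by Proposition~\ref{po}.

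Next, since every one-parameter unipotent subgroup $H\subseteq\SAut(X)$ preserves $Z$ by hypothesis, the orbit $H\cdot z$ is contained in $Z$, so the tangent vector at $z$ to this orbit lies inside $T_zZ$. Feeding this into Proposition~\ref{po}, which says that $T_zX$ is spanned by $T_zZ$ together with the tangent vectors at $z$ to orbits of regular $\G_a$-actions on $X$, one obtains $T_zX=T_zZ$. Because $z$ is regular in both $X$ and $Z$, this forces $\dim Z=\dim X$, and since $Z$ is an irreducible closed subset of the irreducible variety $X$, we conclude $Z=X$.

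Finally, I would appeal to the (tacit) assumption that the $\G_m$-action is nontrivial; otherwise $Z=X$ would be trivially $\SAut(X)$-invariant and the corollary would fail, so one must read the statement with this hypothesis in force. Nontriviality of the non-hyperbolic grading means the ideal $I=\bigoplus_{i>0}\KK[X]_i$ is nonzero, so $Z=V(I)$ is a proper closed subvariety of $X$, contradicting $Z=X$.

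The principal obstacle here is cosmetic rather than mathematical: recognizing that the $\G_m$-action must be taken to be nontrivial. The geometric content is already packaged inside Proposition~\ref{po}, which does the heavy tangent-space calculation, so the corollary follows almost formally once the definitions are unwound.
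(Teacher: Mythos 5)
Your argument is correct and is exactly the deduction the paper intends: the corollary is stated there without proof as an immediate consequence of Proposition~\ref{po}, and your tangent-space contradiction (if $Z$ were $\SAut(X)$-invariant, every $\G_a$-orbit through $z\in Z^{\reg}\cap X^{\reg}$ would lie in $Z$, forcing $T_zX=T_zZ$ and hence $Z=X$) is the natural way to fill it in. Your remark that the $\G_m$-action must be nontrivial, so that $I\neq 0$ and $Z=V(I)\subsetneq X$, is a legitimate reading of a hypothesis the authors leave implicit.
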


\begin{cor}\label{co}
Under the assumptions of Proposition~\ref{po}, assume $z\in Z^{\reg}\cap X^{\reg}$. If the tangent space $T_zZ$ is generated by tangent vectors to $\mathbb{G}_a$-orbits for algebraic $\mathbb{G}_a$-actions on $X$, then $z$ is a flexible point of $X$.
\end{cor}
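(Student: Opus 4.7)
The plan is to deduce the corollary directly from Proposition~\ref{po}, treating it as a straightforward bookkeeping consequence combined with the hypothesis placed on $T_zZ$. Recall that a regular point $z \in X$ is flexible precisely when $T_zX$ is spanned by tangent vectors to orbits of one-dimensional unipotent subgroups of $\Aut(X)$, i.e.\ by tangent vectors to orbits of regular $\G_a$-actions on $X$. So the entire task reduces to showing that every element of $T_zX$ lies in the span of such tangent vectors at $z$.

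First I would invoke Proposition~\ref{po} at the point $z\in Z^{\reg}\cap X^{\reg}$: it gives a decomposition
\[
T_zX \;=\; T_zZ \;+\; W,
\]
where $W$ denotes the subspace spanned by tangent vectors at $z$ to orbits of the regular $\G_a$-actions on $X$ produced in the proof of Proposition~\ref{po} (those coming from Lemma~\ref{prd}). Next I would invoke the hypothesis of the corollary, which says that $T_zZ$ itself is spanned by tangent vectors at $z$ to orbits of (possibly other) regular $\G_a$-actions on $X$. Combining these two facts yields that $T_zX$ is spanned entirely by tangent vectors at $z$ to orbits of $\G_a$-subgroups of $\Aut(X)$, so $z$ is a flexible point of $X$. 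There is no substantive obstacle to this argument; the whole content of the corollary is contained in Proposition~\ref{po} and the definition of a flexible point.
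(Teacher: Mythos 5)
Your argument is correct and is exactly the deduction the paper intends: the paper gives no explicit proof, stating only that Proposition~\ref{po} implies the corollary, and your combination of the decomposition $T_zX = T_zZ + W$ from Proposition~\ref{po} with the hypothesis on $T_zZ$ and the definition of a flexible point is that implicit argument.
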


\section{Sufficient conditions of possessing a flexible point}

For many important classes of varieties we know some natural automorphisms of each variety for this class. In typical situation this set of natural automorphisms contains some $\G_a$-actions and some $\G_m$-actions. Often known $\G_a$-actions on a variety $X$ are not sufficient to prove possessing a flexible point. In this section we elaborate some technique, which allows to prove possessing of a flexible point using not only $\G_a$-actions but also an action of an algebraic torus, i.e. commuting set of $\G_m$-actions.

More precisely, we prove that if the group $H$ generated by $\SAut(X)$ and a torus $T$ acts on an AMDS $X$ with an orbit $O$ such that codimension of $X\setminus O$ in $X$ is at least 2, then $X$ possesses a flexible point. The idea is to find an $H$-semi-invariant nonconstant function in condition that $\FML(X)\neq\KK$. If $\ML(X)\neq\KK$ we can consider the $T$-action on $\ML(X)$ and find a nontrivial semi-invariant. The problem is that $T$-action on $\FML(X)$ a priori can be non regular. We use Cox rings to overcome this difficulty.

\begin{prop}\label{main}
Assume $T$ is a torus acting on an AMDS $X$. Let $H$ be the subgroup in $\Aut(X)$ generated by $T$ and $\SAut(X)$. Assume $X$ does not possess a flexible point. Then there is an $H$-invariant prime divisor $D\subseteq X$.
\end{prop}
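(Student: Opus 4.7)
The plan is to combine Lemma~\ref{D} (which supplies an $\SAut(X)$-invariant prime divisor under the assumption $\FML(X)\neq\KK$, guaranteed here by Proposition~\ref{fml}) with the Cox ring machinery: passing to $\overline{X}=\Spec\CR(X)$ converts rational $\SAut$-invariants into genuine regular semi-invariants, at which point torus representation theory produces an element of $\CR(X)$ that is simultaneously $T$- and $\SAut$-semi-invariant. I then descend this element to an $H$-invariant divisor on $X$.

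First, I would lift both groups to $\overline{X}$. By Proposition~\ref{AD}(4), $T$ lifts via a finite isogeny to a torus $T'$ acting on $\overline{X}$ and commuting with $N(X)$. Each one-parameter unipotent subgroup $U\cong\G_a\subseteq\SAut(X)$ lifts uniquely to a $\G_a$-subgroup $\widetilde U\subseteq\Aut(\overline{X})$ commuting with $N(X)$: the only finite cover of $\G_a$ is itself, and $\G_a$ has no nontrivial homomorphism into the quasitorus $N(X)$, so parts (1) and (2) of Proposition~\ref{AD} give both existence and uniqueness of the lift. Let $\widetilde{\SAut}\subseteq\Aut(\overline{X})$ be the group generated by all such $\widetilde U$. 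Since $T$ normalizes $\SAut(X)$ in $\Aut(X)$ (conjugation takes $\G_a$-subgroups to $\G_a$-subgroups), uniqueness of the lifts makes $T'$ normalize $\widetilde{\SAut}$.

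Next, starting from the $\SAut(X)$-invariant prime divisor $D$ given by Lemma~\ref{D}, I extract a $\Cl(X)$-homogeneous prime element $f_D\in\CR(X)$ cutting out a component of $\pi^{-1}(D)$. Because each $\widetilde U\cong\G_a$ is connected and the set of irreducible components of $\pi^{-1}(D)$ is finite, $\widetilde U$ fixes each component, so $f_D$ is $\widetilde U$-semi-invariant; but $\G_a$ admits no nontrivial characters, so $f_D$ is in fact $\widetilde{\SAut}$-invariant. Since $\CR(X)$ is finitely generated, the $T'$-action on $\CR(X)$ is locally finite; the subspace $\CR(X)^{\widetilde{\SAut}}_{[D]}$ is $T'$-stable because $T'$ normalizes $\widetilde{\SAut}$ and commutes with $N(X)$. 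The $T'$-span of $f_D$ is therefore a finite-dimensional $T'$-subrepresentation of $\CR(X)^{\widetilde{\SAut}}_{[D]}$, and it decomposes as a direct sum of $T'$-eigenspaces; I select a nonzero nonconstant $T'$-eigenvector $g$ in this span. Factoring $g$ into homogeneous primes in the factorially graded ring $\CR(X)$, each factor is individually $T'$-semi-invariant and $\widetilde{\SAut}$-invariant, because connected groups acting on the finite set of prime factors must do so trivially and $\G_a$ carries no nontrivial characters.

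Finally, choosing any prime factor $g_i$ of $g$, the hypersurface $V(g_i)\subseteq\overline{X}$ is an irreducible divisor invariant under $N(X)$, $T'$ and $\widetilde{\SAut}$; its image $D'=\pi(V(g_i))\subseteq X$ is then a prime divisor invariant under both $T$ and $\SAut(X)$, hence under $H$. I expect the main obstacle to be the bookkeeping around the Cox-ring liftings: confirming that the lift of $\SAut(X)$ is well defined and normalized by $T'$, and that the Cox-theoretic dictionary between $\Cl(X)$-homogeneous prime elements of $\CR(X)$ and prime divisors on $X$ transports invariance properties under all three groups $\SAut(X)$, $T$ and $N(X)$. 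Once those functorialities are in place, the remainder is a direct application of the weight decomposition for a torus acting locally finitely on a finitely generated algebra.
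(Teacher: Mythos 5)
Your proposal is correct and follows essentially the same route as the paper: obtain an $\SAut(X)$-invariant prime divisor from Lemma~\ref{D}, pass to its canonical section in the Cox ring, use the lifted torus action and the normality of the lifted $\SAut(X)$ to produce a nonconstant simultaneous semi-invariant, and push its divisor back down to $X$. The only differences are organizational: you treat the principal and non-principal cases uniformly and lift $\SAut(X)$ by lifting each $\G_a$-subgroup via Proposition~\ref{AD} (using that such lifts are unique), where the paper instead invokes the exact sequence $1\to N(X)\to\widetilde{\mathrm{Aut}}(\overline{X})\to\Aut(X)\to 1$ and proves the semi-invariance of the canonical section directly in Lemma~\ref{Ginv}.
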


\begin{proof}
Since $\CR(X)$ is finitely generated, we can consider the total coordinate space $\overline{X}$. Let us denote by $\widetilde{\mathrm{Aut}}(\overline{X})$ the group of automorphisms of $\overline{X}$ that normalize the subgroup $N(X)\hookrightarrow \mathrm{Aut}(\overline{X})$.

By \cite[Theorem~5.1]{AG} we have the following exact sequence

\begin{equation}\label{exact}
1 \longrightarrow N(X) \stackrel{\alpha}{\longrightarrow} \widetilde{\mathrm{Aut}}(\overline{X}) \stackrel{\beta}{\longrightarrow} \Aut(X) \longrightarrow 1
\end{equation}

By Proposition~\ref{AD}(4)  there is a $T$-action on $\overline{X}$ lifting the $T$-action on $X$. That is, we have a subgroup $S\cong T$ in $\beta^{-1}(T)\subset\widetilde{\mathrm{Aut}}(\overline{X})$.

Since $X$ does not possess a flexible point, by Lemma~\ref{D} there is a prime $\mathrm{SAut}(X)$-invariant divisor $D_0\subseteq X$.

{\it Case 1} The divisor $D_0$ is principle. Then $D_0=\div(f)$, where $f$ is $\mathrm{SAut}(X)$-semi-invariant. Since $\mathrm{SAut}(X)$ is generated by unipotent subgroups, each $\mathrm{SAut}(X)$-semi-invariant is invariant. Therefore, $f\in\ML(X)\setminus \KK$. Since $\SAut(X)$ is a normal subgroup of $\Aut(X)$, $\ML(X)$ is $\Aut(X)$-invariant subring, in particular $\ML(X)$ is $T$-invariant. Then $\ML(X)$ is a direct sum of $T$-weighted spaces. Hence, there is a $T$-semi-invariant $p\in\ML(X)\setminus \KK$. Then $\div (p)$ is an $H$-invariant divisor. Then each prime divisor in $\supp(\div(g))$ is $H$-invariant. 

{\it Case 2.} The divisor $D_0$ is not principle. Consider a finitely generated group $K\subseteq  \WDiv (X)$ from Definition~\ref{coxring}. We can assume $D_0\in K$.  Let us take
$$f=1\in\mathcal{L}(X,D_0).$$

Consider the image $\overline{f}$ of $f$ in $\CR (X) = \KK[\overline{X}]$. Since $D_0$ is not principle, we have that $\overline{f}\notin \KK$. Let us consider $\Gamma=\beta^{-1}(\mathrm{SAut}(X))$.

\begin{lem}\label{Ginv}
The element $\overline{f}$ is $\Gamma$-semi-invariant.
\end{lem}

\begin{proof}
Any automorphism $\psi \in \SAut (X)$ defines the automorphism of the group $\WDiv (X)$.  The group $\psi(K)$ is also finitely generated and the map $\psi(K) \to \Cl(X)$ is surjection. The dual map $\psi^*$ maps the space $\CL(X, D)$ to the space $\CL(X, \psi(D))$ where $D \in \WDiv (X)$. So we have a homomorphism of algebras 
$\CS_K \to \CS_{\psi(K)}$ sending $\overline{f}\in\CR(X)$ to itself. At homogeneous elements this homomorphism coincides with $\psi^*$.

Consider a character $\chi$ as in Definition \ref{coxring}. It is easy to see that the mapping $\psi^*$ induces a mapping of Cox rings $\CR_{K, \chi}$ and $\CR_{\psi(K), \psi^*\circ\chi}$. But the Cox rings 
$\CR_{K, \chi}$ and 
$\CR_{\psi(K), \psi^*\circ\chi}$ are isomorphic as $\Cl(X)$-graded algebras. Moreover, there is an isomorphism of graded algebras
$\tau \colon \CR_{\psi(K), \psi^*\circ\chi} \to \CR_{K, \chi}$ such that its restriction to $\CR_0=\KK[X]$ is the identity mapping. Since $\psi(D_0) = D_0$ the isomorphism $\tau$ multiplies the space $\CL(X, D_0)$ by some element 
of~$\KK\setminus \{0\}$. The composition of $\psi$ and $\tau$ gives an automorphism $\overline{\psi}$ of $\CR_{K, \chi}$ which is equal to $\psi$ on $\CR_{K, \chi} (X)_0$ and 
$\overline{\psi} (\overline{f}) = c\overline{f}$ where $c \in \KK\setminus \{0\}$. So $\overline{f}$ is a semi-invariant with respect to the automorphism~$\overline{\psi}$. 

It is easy to see that $\overline{f}$ is a semi-invariant with respect to the action of the group $N(X)$. But it follows from exact sequence (\ref{exact}) that the group $\Gamma$ is generated by the subgroup $N(X)$ and automorphisms $\overline{\psi}$ where $\psi\in \SAut(X)$. So $\overline{f}$ is a semi-invariant with respect to the group $\Gamma$.
\end{proof}

It follows from Lemma \ref{Ginv} that $\CR(X)^{(\Gamma)}\neq \KK$.
Let $\CR(X)_{\xi}$ be a $\Gamma$-weighted subspace corresponding to $\xi\in\mathfrak{X}(\Gamma)$.
\begin{lem}
The subspace $R(X)_{\xi}$ is $T$-invariant.
\end{lem}
\begin{proof}
Since $\SAut(X)$ is normal in $\Aut (X)$ then $\Gamma$ is normal in $\widetilde{\mathrm{Aut}} (\overline{X})$. Let $h\in R(X)_{\xi}$, $t\in T$, $g\in \Gamma$. Then
$$
 g\cdot (t\cdot h)=t\cdot (t^{-1}\cdot  (g\cdot (t\cdot h)))=t\cdot (\hat{g}\cdot h)=t\cdot\xi(\hat{g})h=\xi(\hat{g})(t\cdot h),$$
where $\hat{g}$ is an element of $\Gamma$. Therefore, $t\cdot h$ is a $\Gamma$-semi-invariant. We obtain a homomorphism $T\rightarrow \Aut(\mathfrak{X}(\Gamma))$. Since $T$ is connected, this homomorphism is trivial.
\end{proof}

The action of the group $T$ on $R(X)$ is rational. Therefore, for every $\xi\in\mathfrak{X}(\Gamma)$, $R(X)_\xi$ is a direct sum of $T$-weighted spaces. Since $R(X)^{(\Gamma)}\neq \KK$, there is $q\in R(X)\setminus \KK$, which is a $\Gamma$-semi-invariant and a $T$-semi-invariant.

Let us consider the divisor $\div(q)\subseteq \overline{X}$. This divisor is $\Gamma$-invariant. Hence, it is $N(X)$-invariant.  Therefore, $\pi(\mathrm{supp}(\div(q)))$ is a divisor.
Consider a prime divisor $\overline{D}$ in $\mathrm{supp}(\div(q))$. The divisor $D=\pi(\overline{D})$ lies in the support of the divisor $\pi(\mathrm{supp}(\div(q)))$. Since $H$ is generated by connected algebraic subgroups, $D$ is $H$-invariant.

Proposition \ref{main} is proved.
\end{proof}

\begin{lem}
Let $X$ be an AMDS. Suppose $H$ acts transitively on $X^{\reg}$. Then $X$ is flexible.
\end{lem}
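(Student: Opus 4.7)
The plan is a clean two-step reduction using Proposition~\ref{main}. First I would show that $X$ possesses at least one flexible point. Second, I would argue that the locus of flexible points is $H$-invariant and nonempty inside $X^{\reg}$, so transitivity of $H$ on $X^{\reg}$ forces this locus to coincide with $X^{\reg}$, which is exactly the flexibility of $X$.

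For the first step, I would argue by contradiction. If $X$ had no flexible point, Proposition~\ref{main} would produce an $H$-invariant prime divisor $D \subseteq X$. Since every AMDS is normal by Definition~\ref{coxring}, the singular locus $X \setminus X^{\reg}$ has codimension at least two, so $D$ is not contained in it. Consequently $D \cap X^{\reg}$ is a nonempty proper closed $H$-invariant subset of $X^{\reg}$, contradicting the assumed transitivity of $H$ on $X^{\reg}$. Hence $X$ admits a flexible point.

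For the second step, let $X^{\mathrm{fl}} \subseteq X^{\reg}$ denote the set of flexible points of $X$, which is nonempty by the previous step. The key observation is that $\Aut(X)$ permutes the one-dimensional unipotent subgroups of $\Aut(X)$ by conjugation: for any such subgroup $U$ and any $\sigma \in \Aut(X)$, the differential $d\sigma$ carries tangent vectors to $U$-orbits at $x$ to tangent vectors of $(\sigma U \sigma^{-1})$-orbits at $\sigma(x)$. Thus $X^{\mathrm{fl}}$ is $\Aut(X)$-invariant, hence in particular $H$-invariant. A nonempty $H$-invariant subset of $X^{\reg}$ must equal $X^{\reg}$ by transitivity, so $X^{\mathrm{fl}} = X^{\reg}$ and $X$ is flexible.

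The conceptual difficulty has already been absorbed into Proposition~\ref{main}; once that result is in hand, this lemma is essentially a formal consequence and presents no genuine obstacle. The only point requiring a brief verification is the $\Aut(X)$-invariance of the flexible locus, which amounts to the compatibility of the intrinsic flexibility condition with conjugation of one-parameter unipotent subgroups.
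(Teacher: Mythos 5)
Your proof is correct and follows essentially the same route as the paper: apply Proposition~\ref{main} to get an $H$-invariant prime divisor, use normality to see it meets $X^{\reg}$ and thereby contradict transitivity, then spread the resulting flexible point over all of $X^{\reg}$ by the $H$-action. You merely spell out two details the paper leaves implicit (why $D\cap X^{\reg}$ contradicts transitivity, and why the flexible locus is $\Aut(X)$-invariant), both of which are fine.
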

\begin{proof}
Suppose $X$ has no flexible points. By Proposition~\ref{main} there exists an $H$-invariant prime divisor $D$ on $X$. Since $X$ is normal, $D\cap X^{\reg}\neq \emptyset$. Therefore, we obtain a contradiction. Hence, there is a flexible point on $X$. But since $H$ acts on $X^{\reg}$ transitively, we obtain that all regular points on $X$ are flexible, that is $X$ is flexible.
\end{proof}

\section{Varieties with a linear algebraic group action}

In this section we use technique of the previous one to prove flexibility of some classes of varieties. To prove flexibility of a variety we need some natural automorphisms of $X$. An action of a linear algebraic group often gives these automorphisms.

\begin{prop}
Let $X$ be an AMDS. Let a linear algebraic group $G$ acts on $X$ with an open orbit $O$. Suppose $\mathrm{codim}_{X}X\setminus O\geq 2$. Then $O$ consists of flexible points. 
\end{prop}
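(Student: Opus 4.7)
The plan is to deduce the statement from Proposition~\ref{main} together with the hypothesis $\cdim_X(X\setminus O)\geq 2$. Replacing $G$ by its identity component (the orbit $O$ is open in the irreducible variety $X$, hence irreducible and connected, and so a single $G^{\circ}$-orbit), I may assume $G$ is connected. Let $T\subseteq G$ be a maximal torus. Since a connected linear algebraic group is generated by $T$ and its one-dimensional unipotent subgroups, and those unipotent subgroups lie in $\SAut(X)$, the group $H$ generated by $\SAut(X)$ and $T$ contains $G$.

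The first step is to show that $X$ possesses at least one flexible point. Suppose, for contradiction, that it does not. Applying Proposition~\ref{main} to the $T$-action on the AMDS $X$, I obtain an $H$-invariant prime divisor $D\subseteq X$. Since $G\subseteq H$, this $D$ is $G$-invariant; being a proper closed $G$-invariant subset, it is disjoint from the open $G$-orbit $O$, so $D\subseteq X\setminus O$. This is impossible, because $D$ has codimension one while $\cdim_X(X\setminus O)\geq 2$.

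Having a flexible point, Proposition~\ref{fml} provides an open $\SAut(X)$-orbit $U\subseteq X$. Every point of $U$ is $\SAut(X)$-equivalent to a flexible point, and conjugation by elements of $\SAut(X)$ carries $\G_a$-subgroups to $\G_a$-subgroups, so flexibility is preserved along $\SAut(X)$-orbits; hence every point of $U$ is flexible. Moreover $U$ is in fact $\Aut(X)$-invariant: for any $g\in\Aut(X)$ the set $g\cdot U$ is again an $\SAut(X)$-orbit (because $\SAut(X)$ is normal in $\Aut(X)$) and is open in $X$, and two distinct open $\SAut(X)$-orbits cannot coexist in the irreducible variety $X$, so $g\cdot U = U$. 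In particular $U$ is $G$-invariant.

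The conclusion is then immediate. The intersection $O\cap U$ is nonempty, being the intersection of two nonempty open subsets of the irreducible variety $X$, and it is a $G$-invariant subset of the single $G$-orbit $O$; therefore $O\subseteq U$, and every point of $O$ is flexible. The only point needing any care is the inclusion $G\subseteq H$ and the verification that Proposition~\ref{main} applies to our situation, but both are straightforward, so no serious obstacle is anticipated.
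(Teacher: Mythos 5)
Your proposal is correct and follows essentially the same route as the paper: apply Proposition~\ref{main} to get an $H$-invariant (hence $G$-invariant) prime divisor, derive the contradiction from $\cdim_X(X\setminus O)\geq 2$, and then spread flexibility over $O$ using the openness of the flexible locus and transitivity of $G$ on $O$. You merely make explicit a few steps the paper leaves tacit (reduction to connected $G$, why the divisor must lie in $X\setminus O$, and the $\Aut(X)$-invariance of the open $\SAut(X)$-orbit).
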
 
\begin{proof}
Denote by $T$ the maximal torus in $G$.
Let $H$ be the subgroup in $\Aut(X)$, generated by  $\SAut(X)$ and the image of $T$ in $\Aut (X)$. We have $G\subseteq H$.

Suppose $X$ does not possess a flexible point. Then by Proposition~\ref{main} there exists an $H$-invariant prime divisor $D\subseteq X$. Since $G\subseteq H$, the divisor $D$ is $G$-invariant. A contradiction. Hence, $X$ possesses a flexible point. Therefore, by \cite[Corollary~1.11]{AFKKZ} there exists an open subset $U\subseteq X$ consisting of flexible points. So there is a flexible point in $O$. Therefore, all points in $O$ are flexible.
\end{proof}

If $G$ is a reductive group, then the condition that $X$ is AMDS is always satisfied. Indeed, the following assertion extends results due to Knop \cite{K} to Cox rings.
\begin{prop}\cite[Theorem~4.3.1.5]{AFKKZ}\label{rg}
Let $G$ be a connected reductive algebraic group and $X$ be an irreducible normal unirational $G$-variety of complexity $c(X)\leq 1$ with only constant invertible global functions. Then the divisor class group $\mathrm{Cl}(X)$ and the Cox ring $\mathcal{R}(X)$ are finitely generated.
\end{prop}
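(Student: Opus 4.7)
The plan is to split the statement into finite generation of the divisor class group $\Cl(X)$ and finite generation of the Cox ring $\CR(X)$, and to reduce each to the structural results of Knop on reductive group actions of complexity at most one.

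For $\Cl(X)$, I would first observe that since $G$ is connected, $G$ acts trivially on $\Cl(X)$, so every class contains a $G$-stable representative. The classes of $B$-stable prime divisors, together with the classes arising from divisors of $B$-semi-invariant rational functions (whose $B$-weights form a subgroup of the finitely generated group $\mathfrak{X}(B)$), generate $\Cl(X)$. The $B$-stable prime divisors are finite in number: they split into the finitely many $G$-stable prime divisors (finite by the Luna--Vust--Timashev combinatorial description of $G$-stable divisors for $c(X)\leq 1$) and the finitely many \emph{colors}, i.e.\ $B$-stable but non-$G$-stable prime divisors, finite because $X$ contains only finitely many $B$-orbits of codimension one under the complexity hypothesis. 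Together this yields finite generation of $\Cl(X)$.

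For the Cox ring, I would choose a finitely generated $G$-stable subgroup $K\subseteq \WDiv(X)$ surjecting onto $\Cl(X)$, so that $\CR(X)$ is a quotient of the divisorial algebra $\CS_K$ by the finitely generated ideal $I$ of Definition~\ref{coxring}. Hence it is enough to prove that $\CS_K$ is a finitely generated $\KK$-algebra. Since $G$ is reductive and acts on $\CS_K$ preserving the $K$-grading, the Hadziev--Grosshans transfer principle reduces the question to finite generation of $\CS_K^U$, where $U$ is a maximal unipotent subgroup of a Borel $B\subseteq G$. The algebra $\CS_K^U$ is multigraded by $K\oplus \mathfrak{X}(B)$, and each homogeneous component is a finite-dimensional $\KK(X)^B$-module; unirationality of $X$ together with the bound $c(X)\leq 1$ controls the weight semigroup that actually appears.

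The main obstacle will be the complexity-one case. When $c(X)=0$ (the spherical situation, covering toric and horospherical varieties of semisimple groups), $\CS_K^U$ is essentially a semigroup algebra and finite generation is combinatorial in the spirit of Popov--Vinberg. When $c(X)=1$, however, $\Spec \KK(X)^B$ is a genuine curve, and the graded pieces of $\CS_K^U$ are modules over its function field; one must then show that only finitely many points of this rational quotient contribute essentially new generators. This is the technical heart of Knop's finiteness theorem and is precisely the content of the cited Theorem~4.3.1.5 of~\cite{AFKKZ}.
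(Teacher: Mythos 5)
The paper gives no proof of this proposition at all: it is quoted verbatim from the literature (the numbering ``Theorem~4.3.1.5'' is that of the Arzhantsev--Derenthal--Hausen--Laface book on Cox rings, building on Knop's finiteness theorem), so there is no argument in the paper to compare yours against. Judged on its own terms, your sketch contains a genuine gap precisely in the complexity-one case, which is the only nontrivial one. You claim that the $B$-stable prime divisors are finite in number, the colors being ``finite because $X$ contains only finitely many $B$-orbits of codimension one under the complexity hypothesis,'' and likewise that the $G$-stable prime divisors are finite. Both claims fail when $c(X)=1$: by definition the \emph{generic} $B$-orbit then has codimension one, so the closures of generic $B$-orbits form a one-parameter family of $B$-stable prime divisors parametrized by the rational quotient curve; similarly $X=C\times G/H$ with $C$ a curve and $G/H$ spherical has infinitely many $G$-stable prime divisors $\{c\}\times G/H$. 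Finite generation of $\Cl(X)$ is nevertheless true, but the argument must use unirationality in an essential way: the rational quotient of $X$ by $B$ is a unirational curve, hence rational by L\"uroth, so the members of the one-parameter family of $B$-divisors lying over points of $\mathbb{P}^1$ are linearly equivalent up to finitely many exceptional contributions, and only finitely many classes survive. In your sketch unirationality appears only as a vague remark that it ``controls the weight semigroup,'' whereas it is the load-bearing hypothesis at exactly the step you treat as automatic.

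Two further points. First, the reduction of finite generation of $\CR(X)$ to that of $\CS_K^U$ via the transfer principle is fine in outline, but you must also verify that the $G$-variety $\Spec\CS_K$ (or the relevant quotient) still has complexity at most one before Knop's theorem applies; this is not automatic and is part of the actual proof. Second, your closing sentence defers the complexity-one case to ``the content of the cited Theorem~4.3.1.5,'' which is the very statement to be proved, so as written the argument is circular where it matters most. If the intent is merely to cite the result, as the paper does, that is legitimate; but then the preceding sketch should not present the finiteness of $B$-stable divisors as an established step.
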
 
If $G$ acts on $X$ with an open orbit, then $X$ is rational and hence it is unirational. So, each irreducible normal affine variety $X$ admitting only constant invertible functions with locally transitive action of a reductive group $G$ is AMDS.

\begin{theor}\label{sm}
Let $X$ be a smooth irreducible affine variety with only constant invertible regular functions. Assume a reductive algebraic group $G$ acts on $X$ with an open orbit. Then $X$ is flexible.
\end{theor}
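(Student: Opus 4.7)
The plan is to verify that $X$ is an AMDS and then rule out any $H$-invariant prime divisor on $X$ by combining the Luna slice theorem with Proposition~\ref{po}. First, the presence of an open $G$-orbit implies $X$ is rational, hence unirational. Combined with $G$ reductive, complexity $c(X)=0$, $X$ normal (smoothness), and $\KK[X]^{\times}=\KK^{\times}$, Proposition~\ref{rg} yields finite generation of $\Cl(X)$ and $\CR(X)$, so $X$ is an AMDS. Let $T\subseteq G$ be a maximal torus and set $H:=\langle\SAut(X),T\rangle$. Since the semisimple part $G^{ss}$ of $G$ is generated by $\G_a$-root subgroups, which lie in $\SAut(X)$, we have $G=T\cdot G^{ss}\subseteq H$. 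By the preceding lemma, which asserts that $H$-transitivity on $X^{\reg}$ implies flexibility, it suffices to prove that $H$ acts transitively on $X^{\reg}=X$ (the equality uses smoothness).

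Suppose for contradiction that $X$ has no flexible point. Proposition~\ref{main} then supplies an $H$-invariant (hence $G$-invariant) prime divisor $D\subseteq X$; since $O$ is a single $G$-orbit and $D\neq X$, we must have $D\cap O=\emptyset$, i.e.\ $D\subseteq X\setminus O$. To eliminate such a $D$ I would apply the Luna slice theorem: pick a closed $G$-orbit $Y\subseteq D$ (existing since $D$ is closed in the affine $X$ and $G$ is reductive) and a point $x_0\in Y$. Smoothness of $X$ together with reductivity of $G_{x_0}$ produces an étale $G$-equivariant identification of a neighborhood of $Y$ with $G\times^{G_{x_0}}V$, where $V=T_{x_0}X/T_{x_0}Y$ is the $G_{x_0}$-module normal to $Y$. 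The divisor $D$ corresponds to a $G_{x_0}$-invariant hypersurface in $V$ through the origin, cut out by a $G_{x_0}$-semi-invariant $f$ of some character $\chi$.

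I would then select a one-parameter subgroup $\lambda\colon\G_m\to T$ whose image $\Lambda$ acts non-hyperbolically on $X$ (all $\Lambda$-weights on $\KK[X]$ non-negative) and whose fixed locus $X^{\Lambda}$ contains a regular point of $Y$ and locally lies in $D$. Proposition~\ref{po} then yields a regular $\G_a$-action on $X$ whose orbit through a regular point of $X^{\Lambda}$ is transverse to $X^{\Lambda}$, hence leaves $D$. Since this $\G_a$-action belongs to $\SAut(X)\subseteq H$, the divisor $D$ is not $\SAut(X)$-invariant, contradicting its $H$-invariance. Thus $X$ has a flexible point, and iterating the same argument against any putative proper $H$-invariant closed subset rules out nontrivial $H$-orbit structure on $X^{\reg}$, so the preceding lemma gives flexibility. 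The main obstacle I anticipate is the simultaneous global-local control of $\lambda$: globally, $\Lambda$ must be non-hyperbolic on all of $X$, requiring $\lambda$ in the relative interior of the dual of the $T$-weight monoid of $\KK[X]$, which is strictly convex thanks to $\KK[X]^{\times}=\KK^{\times}$ (any pair of opposite-sign weights would produce a nonconstant invertible function); locally, $X^{\Lambda}$ must meet the slice hypersurface $\{f=0\}$ appropriately, a condition on the restriction of $\lambda$ to $T\cap G_{x_0}$ and its pairings with $\chi$ and the $V$-weights. Matching these global and local conditions is the delicate part of the argument.
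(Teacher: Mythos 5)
Your overall strategy---verify that $X$ is an AMDS via Proposition~\ref{rg}, invoke Proposition~\ref{main} to obtain an $H$-invariant prime divisor $D$, and then kill $D$ by exhibiting a non-hyperbolic $\G_m$-action whose fixed locus meets $D$ in a regular point, so that Proposition~\ref{po} contradicts the $\SAut(X)$-invariance of $D$---is exactly the paper's. The gap sits in the one step you yourself flag as ``delicate'': producing the non-hyperbolic subgroup $\Lambda$. You look for it inside the maximal torus $T$ of $G$ and justify its existence by claiming that the $T$-weight monoid of $\KK[X]$ spans a pointed cone because $\KK[X]^{\times}=\KK^{\times}$. That implication is false: weights $\chi$ and $-\chi$ occurring in $\KK[X]$ give a product of weight $0$, not an invertible function. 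Concretely, for $G=\mathrm{SL}_2$ acting on $X=\KK^2$ (smooth, only constant invertible functions, open orbit $\KK^2\setminus\{0\}$) the maximal torus acts with weights $1$ and $-1$, so \emph{no} nontrivial one-parameter subgroup of $T$ is non-hyperbolic; more generally, for any semisimple $G$ acting nontrivially the set of $T$-weights of $\KK[X]$ is Weyl-invariant, each nonzero Weyl orbit sums to zero, and the required $\lambda$ cannot exist inside $G$ at all. So the needed $\G_m$ must come from outside $G$, and the local \'etale slice picture you describe gives no mechanism for producing it globally.

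The paper's key move, absent from your proposal, is to use smoothness to apply Luna's theorem \emph{globally}: $X$ has a unique closed orbit $Z=G/S$ with $S$ reductive, and $X$ is $G$-equivariantly isomorphic to the total space $G*_S W$ of a homogeneous vector bundle over $Z$. The fiberwise homothety $\G_m$-action on $W$ commutes with all of $G$, is non-hyperbolic (its degree-zero component of $\KK[X]$ is $\KK[Z]$), and its fixed locus is exactly the zero section $Z$. Since every nonempty closed $G$-invariant subset of the affine variety $X$ contains the unique closed orbit, one gets $Z\subseteq D$ for free; then $T_zZ\subseteq T_zD$ and all tangent vectors to $\G_a$-orbits at $z\in Z$ also lie in $T_zD$, contradicting Proposition~\ref{po}. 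No matching of ``global and local conditions'' is required. Two smaller repairs: Proposition~\ref{po} does not hand you a single $\G_a$-orbit transverse to $D$, so your transversality phrasing should be replaced by this span argument; and the final step cannot literally ``iterate the same argument'' through Proposition~\ref{main}, because the complement of the open $\SAut(X)$-orbit need not be a divisor---one argues directly that it is a closed $G$-invariant set, hence contains $Z$, and reruns the tangent-space contradiction.
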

\begin{proof}
Let $Z$ be the unique closed orbit in $X$. The stabilizer of a point on this orbit is a reductive subgroup $S\subseteq  G$, see \cite[Theorem~4.17]{VPT}. 
Moreover, it follows from Luna's Etale Slice Theorem that there is a finite dimensional rational $S$-module $W$ such that the variety $X$ is $G$-equivariantly isomorphic to the total space $G*_SW$ of the homogeneous vector bundle over $G/S$ with the fiber $W$, see \cite[Theorem 6.7]{VPT}.

Consider an elliptic $\G_m$-action on $W$ via homoteties. It commutes with the $S$-action. Therefore, it induce a non-hyperbolic action on $X$ with the set of fixed points equal to~$Z$. Denote by $T$ the image of $\G_m$ in $\Aut(X)$. Let $H$ be the group generated by $\SAut(X)$ and~$T$. Let $z\in Z$. By Proposition~\ref{po} the tangent space $T_zX$ is spanned by $T_zZ$ and tangent vectors to $\G_a$-orbits. 

Suppose $X$ does not possess a flexible point. Then by Proposition~\ref{main} there is an $H$-invariant divisor $D\subseteq X$. Therefore, $D$ is $G$-invariant. Since $Z$ is the unique closed $G$-orbit in $X$, we have $Z\subseteq  D$. Hence, $T_zZ\subseteq T_zD$. Since $D$ is $\SAut(X)$-invariant, we have that all tangent vectors to $\G_a$-orbits at $z$ lies in $T_zD$. Therefore, $T_zZ$ and tangent vectors to all $\G_a$-orbits of $z$ spans a subspace in $T_zD$. A contradiction. Hence, $X$ possesses a flexible point.

By Proposition~\ref{fml} the group $\SAut(X)$ acts on $X$ with an open orbit $O$. This open orbit consists of all flexible points on $X$, see \cite[Corollary~1.11]{AFKKZ}. Suppose, $O\neq X$. Then $D=X\setminus O$ is a nonempty closed 
$\Aut(X)$-invariant subset. In particular, $D$ is $H$-invariant. This gives a contradiction as above.

\end{proof}

\begin{ex}
Consider the hypersurface $X$ in $\KK^5$ given by
$$
y_{11}x_1^2+2y_{12}x_1x_2+y_{22}x_2^2=1,
$$
It can be written in the following way
$$
\begin{pmatrix}
x_1&x_2
\end{pmatrix}
\begin{pmatrix}
y_{11}&y_{12}\\
y_{12}&y_{22}
\end{pmatrix}
\begin{pmatrix}
x_1\\
x_2\\
\end{pmatrix}
=1
$$
It is easy to see that $X$ is smooth. We can take a symmetric bilinear functions $f$ on a vector space $\KK^2$ with matrix $(y_{ij})$ and a vector $v=(x_1,x_2)$. Then $f(v, v)=1$. The group $\mathrm{GL}_2(\KK)$ acts on the set of pairs $(f, v)$ by linear changing of basis. There are two $\mathrm{GL}_2(\KK)$-orbits on $X$. Orbit consisting of pairs $(f, v)$  with non-degenerate $f$ is an open one. Therefore, $X$ is flexible.
\end{ex}

The condition that $G$ is reductive cannot be waived in Theorem~\ref{sm}. Let us give an example of a smooth irreducible affine variety having only constant invertible functions with a locally transitive action of $\G_m\rightthreetimes \G_a$ which is not flexible.
\begin{ex}\label{ce}
Let $X=\left\{xy^2=z^2-1\right\}$ be a Danielewski surface. It is easy to see that $X$ is smooth. The Makar-Limanov invariant of $X$ equals $\KK[y]$, see \cite{ML}, therefore, $X$ is not flexible. If $X$ admits a nonconstant invertible function $f$, then $f\in\ML(X)=\KK[y]$. Therefore, $f(y)g(y)=1$. This is not possible if $f$ or $g$ is not a constant. So, $X$ is a smooth irreducible affine variety with only constant invertible functions, which is not flexible.
Consider the following action of $\G_m\rightthreetimes \G_a$ on $X$
$$
(t,s)\cdot(x,y,z)
=(t^2(x+2zs+s^2y^2),t^{-1}y,z+sy^2).
$$
The set $O=\{p\in X\mid y(p)\neq 0\}$ is the open orbit. 

\end{ex}

\begin{prop}\label{ot}
Suppose a linear algebraic group $G$ acts on an AMDS $X$. Assume for each $G$-orbit $O\subseteq X$ consisting of regular points there is a non-hyperbolic $\G_m$-subgroup $\Lambda$ in $\Aut(X)$ such that the set of $\Lambda$-fixed points lies in the closure $\overline{O}$ and contains a regular point. Then $X$ is flexible.
\end{prop}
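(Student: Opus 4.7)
Let $T$ be the image in $\Aut(X)$ of a maximal torus of $G$, and set $H=\langle \SAut(X),T\rangle$. Any connected linear algebraic group is generated by a maximal torus together with its one-parameter unipotent subgroups, which act on $X$ as $\G_a$-subgroups of $\SAut(X)$, so the image of $G$ in $\Aut(X)$ lies in $H$ and every $H$-invariant subset of $X$ is automatically $G$-invariant. At any $z\in X^{\reg}$ the tangent space to the $G$-orbit through $z$ equals $\lie(G)\cdot z$ and, by the Levi decomposition, lies in $V_z+V_z^T$, where $V_z$ denotes the span of tangent vectors to $\G_a$-orbits at $z$ and $V_z^T$ the span of tangent vectors to $T$-orbits at $z$.

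I argue by contradiction: assume $X$ is not flexible. I produce a proper closed $H$-invariant subset $C\subseteq X$ with $C\cap X^{\reg}\neq\emptyset$ in two cases. If $X$ has no flexible point, Proposition~\ref{main} supplies an $H$-invariant prime divisor $D\subseteq X$ (and $D\cap X^{\reg}\neq\emptyset$ because $X$ is normal, so the singular locus has codimension $\geq 2$), and I set $C=D$. If $X$ admits a flexible point but is not flexible, then by Proposition~\ref{fml} together with \cite[Corollary~1.11]{AFKKZ} the flexibility locus $F$ is a nonempty open $\Aut(X)$-invariant subset of $X^{\reg}$ that does not exhaust $X^{\reg}$, and I set $C=X\setminus F$, which is proper closed and $H$-invariant since $F$ is open in $X$.

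Now pick a $G$-orbit $O\subseteq C\cap X^{\reg}$ of minimal dimension among $G$-orbits of regular points contained in $C$. Applying the hypothesis of the proposition to $O$ yields a non-hyperbolic $\G_m$-subgroup $\Lambda\subseteq \Aut(X)$ whose fixed set $Z:=Z_\Lambda\subseteq\overline{O}$ meets $X^{\reg}$; since $C$ is closed and $O\subseteq C$, one has $Z\subseteq\overline{O}\subseteq C$. By Proposition~\ref{po}, choose $z\in Z^{\reg}\cap X^{\reg}$, so that $T_zX=T_zZ+V_z$. If $z\in\overline{O}\setminus O$, then $z$ belongs to a regular $G$-orbit $O_0\subseteq\overline{O}\subseteq C$ of dimension strictly less than $\dim O$, contradicting minimality. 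Otherwise $z\in O$; since $O$ is open in $\overline{O}$, $T_z\overline{O}=T_zO$, so $T_zZ\subseteq T_zO\subseteq V_z+V_z^T$, and combining with Proposition~\ref{po} yields $T_zX=T_zZ+V_z\subseteq V_z+V_z^T$. Choose finitely many $\G_a$-subgroups of $\SAut(X)$ whose tangent vectors at $z$ span $V_z$; together with $T$ they generate an algebraic subgroup $H_0\subseteq H$, and $\lie(H_0)\cdot z\supseteq V_z+V_z^T=T_zX$, so the orbit $H_0\cdot z$ has full dimension at $z$ and is therefore open in $X$. Since $H_0\cdot z\subseteq H\cdot z\subseteq C$, this contradicts the fact that a proper closed subset of the irreducible variety $X$ has empty interior.

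The main obstacle is the tangent-space computation in the last step: I have to combine the Levi-decomposition inclusion $T_zO\subseteq V_z+V_z^T$ with Proposition~\ref{po} to conclude $T_zX=V_z+V_z^T$, and then realize the resulting openness of the $H$-orbit through $z$ via an honest algebraic subgroup $H_0\subseteq H$ (since $H$ itself is not known to be algebraic), in order to contradict $C$ having empty interior in $X$.
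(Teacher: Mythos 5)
Your argument follows the same skeleton as the paper's proof: form $H=\langle \SAut(X),T\rangle$, use Proposition~\ref{main} (respectively the openness and $\Aut(X)$-invariance of the flexibility locus) to produce a proper closed $H$-invariant subset $C$ meeting $X^{\reg}$, find a regular $G$-orbit $O\subseteq C$, and play the hypothesis together with Proposition~\ref{po} against the invariance of $C$. The divergence is in the endgame, and that is where there is a genuine gap: you assert that finitely many $\G_a$-subgroups of $\SAut(X)$ together with $T$ ``generate an algebraic subgroup $H_0\subseteq H$''. This is false in general. A subgroup of $\Aut(X)$ generated by finitely many connected algebraic subgroups need not be algebraic; for instance, in $\Aut(\BA^2)$ the two $\G_a$-subgroups $(x,y)\mapsto(x+ty,y)$ and $(x,y)\mapsto(x,y+tx^2)$ generate a group containing automorphisms of arbitrarily large degree, which therefore cannot lie in any algebraic subgroup. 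Consequently you cannot speak of $\lie(H_0)$ nor invoke the orbit theory of algebraic groups for $H_0$, and your final contradiction is not established as written.

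The conclusion you want is recoverable without algebraicity of $H_0$. Either invoke \cite[Proposition~1.5]{AFKKZ}, which says that orbits of a group generated by connected algebraic subgroups are locally closed (hence smooth, being homogeneous); since $T_z(H_0\cdot z)$ contains each $T_z(U_i\cdot z)$ and $T_z(T\cdot z)$, it equals $T_zX$, so $H_0\cdot z$ is open. Or, more elementarily, consider the morphism $U_1\times\dots\times U_m\times T\to X$ sending $(u_1,\dots,u_m,t)$ to $u_1\cdots u_m t\cdot z$: its differential at the identity has image $V_z+V_z^T=T_zX$, so the morphism is dominant, while its image lies in $H\cdot z\subseteq C$; a proper closed subset of the irreducible variety $X$ cannot contain a dense constructible set. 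With either repair your proof closes. It is worth noting that the paper's endgame is shorter: it takes any regular $p$ in the fixed-point set $L\subseteq\overline{O}\subseteq D$ and observes that $T_pL$ and all tangent vectors to $\G_a$-orbits at $p$ lie in $T_pD$ (since $L\subseteq D$ and $D$ is $\SAut(X)$-invariant), contradicting Proposition~\ref{po}; no Levi decomposition, no minimality of orbit dimension, and no orbit-openness argument appear. On the other hand, your (repaired) route replaces the tangent-space comparison $T_pD\subsetneq T_pX$, which is delicate at points where the divisor $D$ is singular, by a dimension count that is insensitive to the singularities of $C$, so the extra work does buy some robustness.
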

\begin{proof}
Suppose $X$ does not possess a flexible point. Let $T$ be a maximal torus in $G$. Denote by $H$ the group generated by $\SAut(X)$ and $T$. Then by Proposition~\ref{main} there exists an $H$-invariant prime divisor $D$. Since $X$ is normal, codimension of the set of singular points is at least 2. Therefore, $D$ contains a regular point. The divisor $D$ is $G$-invariant. Therefore, $D$ contains an orbit $O$ consisting of regular points. There is a non-hyperbolic $\G_m$-subgroup $\Lambda$ in $\Aut(X)$ such that the set of $\Lambda$-fixed points $L$ lies in the closure $\overline{O}$ and contains a regular point. Fix a regular point $p\in L$. All tangent vectors to $\G_a$-orbits at $p$ lies in $T_pD$. This contradicts to  Proposition~\ref{po}. Hence, $X$ possesses a flexible point.

Let $U$ be the set of all flexible points of $X$. It is an open nonempty subset of $X$, see \cite[Proposition~1.11]{AFKKZ}. Suppose there exists a regular point $p\in X\setminus U$. Then $D=X\setminus U$ is a proper closed $\Aut(X)$-invariant subset containing a regular point. We obtain a contradiction as above. Therefore, $X$ is flexible.  
\end{proof}

Let us use Proposition~\ref{ot} to give a new proof of flexibility of normal affine $\mathrm{SL}_2(\KK)/\mu_m$-embeddings, see \cite[Theorem 5.7]{AFKKZ} for original proof.

\begin{ex}
Let $X$ be a normal irreducible three-dimensional affine variety with a locally transitive $\mathrm{SL}_2(\KK)$-action.  Such varieties were classified by Popov in \cite{Po}. 
If $X\cong G/H$ is homogeneous, then it is flexible by  \cite[Proposition~5.4]{AFKKZ}. According to \cite{Po}, a normal non-homogeneous affine $\mathrm{SL}_2$-threefold with an open orbit is uniquely determined by a pair $(h, m)$, where $m$ is the order of generic stabilizer and $h = p/q\in(0, 1]\cap\mathbb{Q}$ is the so called {\it height} of $X$. The generic stabilizer is a cyclic group $\mu_m$. The variety $X$ corresponding to a pair $(h, m)$ is denoted by $E_{h,m}$ and is called a {\it normal affine $\mathrm{SL}_2(\KK)/\mu_m$-embedding of height~$h$}. If $h=1$, then $X$ is smooth and it is flexible by \cite[Theorem~5.6]{AFKKZ}.  

Batyrev and Haddad \cite{BH} describe the Cox realization of $E_{h,m}$ with $h<1$. Assume that $p$ and $q$ are coprime positive integers. Denote 
$$a=m/k,\qquad b=(q-p)/k,\qquad  \text{where}\ k=\gcd(q-p,m).$$

Consider a hypersurface $D_b=\{y^b=x_1x_4-x_2x_3\}$. There is an $\mathrm{SL}_2(\KK)$-action on this hypersurface 
$$
\begin{pmatrix}
\alpha& \beta\\
\gamma& \delta
\end{pmatrix}\cdot
(x_1, x_2, x_3, x_4, y)=(\alpha x_1+\beta x_2, \gamma x_1+\delta x_2, \alpha x_3+\beta x_4, \gamma x_3+\delta x_4, y).
$$

Let $\mu_a$ be a cyclic group generated by a primitive root of unity of degree $a$. Then $E_{h,m}$ is $\mathrm{SL}_2$-equivariant isomorphic to categorical quotient of the hypersurface $D_b$
modulo the diagonal action of the group $N=\G_m\times \mu_a$ 
via
$$(t,\xi)\cdot (x_1, x_2, x_3, x_4, y)=(t^{-p}\xi^{-1} x_1, t^{-p}\xi^{-1} x_2, t^{q}\xi x_3, t^{q}\xi x_4, t^ky).$$

There are three $\mathrm{SL}_2$-orbits on $E_{h,m}$ if $h<1$. Two of them, the open one and the two-dimensional one, are regular. The third one is a singular $\mathrm{SL}_2$-fixed point. The inverse image of the closure of two-dimensional orbit in $D_b$ is the set $\{y=0\}$. 

Let us consider the following $\G_m$-action on $D_b$ 
$$t\cdot (x_1, x_2, x_3, x_4, y)=(t^px_1, t^qx_2, t^{-q}x_3, t^{-p}x_4, y).$$ This action commute with $N$-action. Hence, it induces a  $\G_m$-action on~$E_{h,m}$. Let us denote the corresponding $\G_m$-subgroup of $\Aut(E_{h,m})$ by $\Lambda$.

The algebra of $N$-invariants is spanned by monomials of the form $x_1^{s}x_2^ux_3^vx_4^wy^z$, where
\begin{equation}\label{eq}
\begin{cases}
-ps-pu+qv+qw+kz=0,\\
a\mid (-s-u+v+w).
\end{cases}
\end{equation}
The $\Lambda$-weight of such a monomial equals 
$$
\tau=sp+uq-vq-wp.
$$
Using the first equation from (\ref{eq}) we obtain
$$
\tau=u(q-p)+w(q-p)+kz\geq 0.
$$
Therefore, the $\Lambda$-action on $E_{h,m}$ is non-hyperbolic. If $\tau=0$, then $z=0$, hence, the set of $\Lambda$-fixed points lies in the closure of the two-dimensional $\mathrm{SL}_2$-orbit. The point $P=(x_1,x_2,x_3,x_4,y)=(1,0,1,0,0)\in D_b$ lies in the inverse image of the set of $\Lambda$-fixed points. Consider the following $N$-invariant $f=x_1^{aq}x_3^{ap}$. We have $f(P)\neq 0$. Therefore, $P$ is not in the inverse image of the singular point.  Now we can apply Proposition~\ref{ot}, which implies flexibility of $E_{h,m}$.
\end{ex}

The following corollary is a particular case of Proposition~\ref{ot}.

\begin{cor}\label{slor}
Suppose a linear algebraic group $G$ acts on an AMDS $X$. Assume for every $G$-orbit $O\subseteq X$ consisting of regular points there is a non-hyperbolic $\G_m$-subgroup $\Lambda$ in $\Aut(X)$ such that the set of $\Lambda$-fixed points is the closure $\overline{O}$. Then $X$ is flexible.
\end{cor}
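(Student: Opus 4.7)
The plan is to deduce Corollary~\ref{slor} directly from Proposition~\ref{ot} by verifying that the hypotheses of the corollary are strictly stronger than those of the proposition. There is nothing new to prove about flexibility itself; the only task is to check the two required properties of the $\G_m$-subgroup $\Lambda$.

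First, I would fix an arbitrary $G$-orbit $O\subseteq X$ consisting of regular points and let $\Lambda$ be the non-hyperbolic $\G_m$-subgroup in $\Aut(X)$ provided by the hypothesis of the corollary, so that the fixed-point set $L$ of $\Lambda$ equals $\overline{O}$. Then I would check the two conditions appearing in Proposition~\ref{ot}: (a) $L\subseteq \overline{O}$, which is immediate since $L=\overline{O}$; (b) $L$ contains a regular point, which follows because $O$ is a nonempty subset of $L=\overline{O}$ and, by assumption, every point of $O$ is regular in $X$.

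Having verified these two conditions for every $G$-orbit consisting of regular points, I would simply invoke Proposition~\ref{ot} to conclude that $X$ is flexible. Since the argument is a direct specialisation, I do not anticipate any genuine obstacle; the only thing worth emphasising in the write-up is that the equality $L=\overline{O}$ is a priori stronger than the inclusion $L\subseteq\overline{O}$ together with the existence of a regular fixed point, which is exactly what lets the corollary be stated as a particular case of Proposition~\ref{ot}.
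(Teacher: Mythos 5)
Your proposal is correct and coincides with the paper's treatment: the paper simply states that Corollary~\ref{slor} is a particular case of Proposition~\ref{ot}, and your verification that $L=\overline{O}$ implies both $L\subseteq\overline{O}$ and that $L$ contains a regular point (since the nonempty orbit $O\subseteq L$ consists of regular points) is exactly the specialisation intended.
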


Now we use our technique to prove flexibility of a normal affine complexity-zero horospherical variety without nonconstant invertible functions.

\begin{theor}\label{las}
Let $X$ be a normal affine complexity-zero horospherical variety of a connected group $G$. Assume  $X$ does not admit any nonconstant invertible functions. Then $X$ is flexible.
\end{theor}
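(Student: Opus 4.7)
I will verify the hypothesis of Proposition~\ref{ot} for $X$ and invoke it to conclude flexibility. Since the unipotent radical of $G$ acts trivially on the horospherical variety $X$, I may assume $G$ reductive and, after a finite covering, $G = T \times G'$ with $T$ a torus and $G'$ semisimple, bringing the horospherical decomposition $\KK[X] = \bigoplus_{\Lambda \in P} S_\Lambda$ from the preliminaries into force. Since $G$ acts with an open orbit, $X$ is unirational; together with complexity zero and the absence of nonconstant invertible functions, Proposition~\ref{rg} then yields that $\CR(X)$ is finitely generated, so $X$ is an AMDS and Proposition~\ref{ot} is applicable.

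Every group homomorphism $\lambda\colon\ZZ P\to\ZZ$ with $\lambda(P)\subseteq\ZZ_{\geq 0}$ defines a $\ZZ$-grading of $\KK[X]$ via $\deg(S_\Lambda)=\lambda(\Lambda)$, hence a regular non-hyperbolic $\G_m$-action on $X$ whose fixed locus is the vanishing set of the ideal $\bigoplus_{\lambda(\Lambda)>0}S_\Lambda$. For a non-open $G$-orbit $O\subseteq X^{\reg}$ corresponding to a face $\tau\subsetneq\sigma$, I choose an integer-valued $\lambda$ with $\lambda|_\tau=0$ and $\lambda>0$ on $\sigma\setminus\tau$ --- a standard polyhedral separation argument applied to the face $\tau$ of the rational polyhedral cone $\sigma$, after which one clears denominators to land in $\homo(\ZZ P,\ZZ)$. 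The fixed locus of the induced $\G_m$ is then the zero set of $\bigoplus_{\Lambda\in P\setminus\tau}S_\Lambda=I(O)$, namely $\overline{O}$, which contains $O\subseteq X^{\reg}$ as required.

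For the open orbit $O$ the recipe returns $\lambda=0$, so I must produce a non-hyperbolic $\G_m\subseteq\Aut(X)$ with a regular fixed point by other means. The horospherical structure supplies a richer torus action on $X$: beyond cocharacters of $T_P:=\Spec\KK[\ZZ P]$, one can combine them with cocharacters of the torus $T\subseteq G$, obtaining further non-hyperbolic $\G_m$-subgroups of $\Aut(X)$ whose grading on $S_\Lambda$ (viewed as a $T$-representation) depends on the $T$-weight as well as on $\Lambda$. Pairing a cocharacter $\mu$ of $T$ with a sufficiently large scaling $\lambda$ from $T_P$ produces a non-hyperbolic $\G_m$ whose degree-zero part consists of $T$-lowest-weight vectors and whose fixed locus is the closure of a lowest-weight stratum meeting the open orbit, hence $X^{\reg}$. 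With all the required $\G_m$-subgroups assembled, Proposition~\ref{ot} delivers flexibility of $X$. The main anticipated difficulty is precisely this open-orbit case --- showing that the horospherical structure always supplies a suitable $\G_m$ whose fixed locus hits the regular locus --- which is what distinguishes the general linear-group setting from the pure toric case and the semisimple case previously treated in \cite{AKZ} and \cite{Sh}; the polyhedral separation for non-open orbits and the identification of the fixed locus with $\overline{O}$ are, by contrast, completely routine once the horospherical decomposition is in hand.
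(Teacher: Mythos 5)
Your argument for the non-open orbits is exactly the paper's proof of Theorem~\ref{las}: reduce to $G=T\times G'$, get the AMDS property from Proposition~\ref{rg} via unirationality, and for the face $\tau$ corresponding to an orbit $O\subseteq X^{\reg}$ take an integral linear functional $l$ vanishing on $\tau$ and positive on $P\setminus\tau$, so that the resulting non-hyperbolic $\G_m$-action has fixed locus equal to the zero set of $I(\overline{O})=\bigoplus_{\Lambda\in P\setminus\tau}S_\Lambda$, i.e.\ $\overline{O}$; then invoke Corollary~\ref{slor} (equivalently Proposition~\ref{ot}). The one place you diverge is the open orbit, which you declare the main difficulty and treat with a construction that, as written, is not a proof: you never verify that the fixed locus of your ``paired'' $\G_m$ contains a regular point, which is precisely the issue, and since $G=T\times G'$ the $T$-weights are already recorded in $\mathfrak{X}(B)=\mathfrak{X}(T)\oplus\mathfrak{X}(B')$, so cocharacters of $T$ give you nothing beyond the torus $\Spec\KK[\ZZ P]$ you already have. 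Fortunately this case is vacuous rather than central. For the open orbit $\overline{O}=X$, so the hypothesis of Proposition~\ref{ot} only asks for some non-hyperbolic $\G_m$ whose fixed locus contains a regular point; the $\G_m$ attached to any non-open orbit of regular points already does this, and if no such orbit exists then no $G$-invariant prime divisor meets $X^{\reg}$, so Proposition~\ref{main} yields the contradiction directly. Indeed, in the proof of Proposition~\ref{ot} the hypothesis is only ever applied to an orbit contained in a prime divisor, hence never to the open orbit; the paper simply runs the same recipe uniformly, where for the open orbit it degenerates to $l=0$, and does not dwell on the point. Replacing your third paragraph by this observation makes your proof coincide with the paper's.
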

\begin{proof}
As we have mentioned in Section~\ref{pre}, we can assume that $G = T\times G'$, where $T$ is a torus and $G'$ is a connected semisimple group.

Let $P$ be a subsemigroup in $\mathfrak{X}(B)^+$  corresponding to $X$ and $\sigma$ be the cone spanned by $P$ in the space $V=\mathfrak{X}(B)^+\otimes_{\mathbb{Z}}\mathbb{Q}$, see Section~\ref{pre}. 

Let $O\subseteq X$ be a $G$-orbit consisting of regular points. 
Then $O$ corresponds to a face $\tau$ of $\sigma$. There is a linear function $l$ on $V$ such that $l\mid_{\tau}=0$ and $l$ has positive integer values at all elements of $P\setminus \tau$. Let us consider a $\mathbb{Z}$-grading of $\KK[X]$ given by $l$, that is 
$$
\KK[X]_i=\bigoplus_{p\in P, l(p)=i} S_p.
$$ 
We have $I(\overline{O})=\bigoplus_{i>0}\KK[X]_i$. This $\mathbb{Z}$-grading corresponds to a non-hyperbolic $\G_m$-action. The set of $\G_m$-fixed points is the set of zeros of $\bigoplus_{i>0}\KK[X]_i$, i.e. $\overline{O}$. Since $X$ admits a locally transitive action of a reductive group by Proposition~\ref{rg} it is an AMDS. 
By Corollary~\ref{slor} $X$ is flexible. 

\end{proof}


\begin{thebibliography}{99}

\bibitem{ADHL} I. Arzhantsev, U. Derenthal, J. Hausen, and A. Laface. {\it Cox rings}. Cambridge Studies in Adv. Math. 144, Cambridge University Press, New York, 2015.

\bibitem{AG} I. Arzhantsev and S. Gaifullin. {\it Cox rings, semigroups and automorphisms of affine algebraic varieties}
Mat. Sb. {\bf 201}:1 (2010), 3--24; English transl.: Sbornik Math. {\bf 201}:1 (2010), 1--21.

\bibitem{AGN} I. Arzhantsev and S. Gaifullin. {\it The automorphism group of a rigid affine variety}. Math. Nachr. {\bf 290}:5-6 (2017), 662-671.
%

\bibitem{AFKKZ}
I. Arzhantsev, H. Flenner, S. Kaliman, F. Kutzschebauch, and M. Zaidenberg. {\it Flexible varieties and automorphism groups}. Duke Math. J. {\bf 162}:4 (2013), 767-823.

\bibitem{AKZ}
I. Arzhantsev, K. Kuyumzhiyan, and M. Zaidenberg. {\it Flag varieties, toric varieties, and
suspensions: three instances of infinite transitivity}. Mat. Sb. {\bf 203}:1 (2012), 3--30; English transl.: Sbornik Math. {\bf 203}:7 (2012), 923-949.

\bibitem{BH} V. Batyrev and F. Haddad. {\it On the geometry of SL(2)-equivariant flips}. Mosc. Math. J. {\bf 8} (2008), 621-646.

\bibitem{Dz}
F. Donzelli. {\it Makar-Limanov invariants, Derksen invariants, flexible points}. 	arXiv:1107.3340.

\bibitem{FZ}
H. Flenner and M. Zaidenberg. {\it On the uniqueness of $\CC^*$-actions on affine surfaces}.
In: Affine Algebraic Geometry, Contemp. Math. 369, Amer. Math. Soc., Providence, RI, (2005), 97-111.

\bibitem{G}
S. Gaifullin. {\it Automorphisms of Danielewski varieties}. arXiv:1709.09237.

\bibitem{K} F. Knop. {\it Uber Hilberts vierzehntes Problem fur Varietaten mit Kompliziertheit
eins}. Math. Z., {\bf 213}:1 (1993), 33-36.

\bibitem{ML}
L. Makar-Limanov. {\it On the group of automorphisms of a surface $x^ny=P(z)$}. Israel J. Math. {\bf 121} (2001), 113-123. 

\bibitem{VO}
A. Onishchik and E. Vinberg. {\it Lie Groups and Algebraic Groups}. Springer-Verlag. Berlin, Heidelberg, New York, 1990.

\bibitem{Po} V. Popov. {\it Quasihomogeneous affine algebraic varieties of the group SL(2)}. Izv. Akad. Nauk SSSR Ser.Mat., {\bf 37}:4 (1973), 792-832; English transl.: Math. USSR-Izv. {\bf 7}:4 (1973), 793-831.

\bibitem{PV} V. Popov and E. Vinberg, {\it On a class of quasihmogeneous affine varieties}. Izv. Akad. Nauk SSSR Ser.Mat., {\bf 36}:4 (1972), 749-764; English transl.: Math. USSR-Izv. {\bf 6}:4 (1972), 743-758.

\bibitem{VPT} V. Popov and E. Vinberg. {\it Invariant Theory}. In: Algebraic geometry IV, A. N. Parshin, I. R. Shafarevich (eds.), Berlin, Heidelberg, New York: Springer-Verlag, 1994. 

\bibitem{Sh} A. Shafarevich. {\it Flexibility of affine horospherical varieties of semisimple groups}. Mat. Sb. {\bf 208}:2 (2017), 121-148; English transl.: Sbornik Math. {\bf 208}:2 (2017), 285-310.

\bibitem{T} D. Timashev. {\it Homogeneous spaces and equivariant embeddings}. Encycl. Math. Sci., {138}, Springer-Verlag. Berlin, Heidelberg, 2011.
\end{thebibliography}
\end{document}